\documentclass[12pt,a4paper,twoside]{article}
\usepackage{graphicx}
\usepackage{amsfonts,amsmath,amssymb,amscd}
\usepackage{bezier, graphics,graphicx}
\usepackage{epsfig, layout,latexsym}
\usepackage{theorem}
\usepackage{cite}
\usepackage{indentfirst}
\usepackage{array}

 \usepackage[T1,T2A]{fontenc}
\usepackage[utf8]{inputenc}
\usepackage[english]{babel}

\AtBeginDocument{} \AtBeginDocument{}

\theorembodyfont{\sl}
\newtheorem{theorem}{Theorem}[section]

\newtheorem{lemma}[theorem]{Lemma}
\newtheorem{corollary}[theorem]{Corollary}
\newtheorem{remark}[theorem]{Remark}
\newtheorem{example}[theorem]{Example}

\numberwithin{equation}{section} \numberwithin{theorem}{section}
\newcommand{\card}{\text{card}}

\renewcommand{\thetheorem}{\arabic{section}.\arabic{theorem}}
\renewcommand{\theproposition}{\arabic{section}.\arabic{proposition}}
\renewcommand{\thedefinition}{\arabic{section}.\arabic{definition}}
\renewcommand{\thecorollary}{\arabic{section}.\arabic{corollary}}
\renewcommand{\thelemma}{\arabic{section}.\arabic{lemma}}
\renewcommand{\theremark}{\arabic{section}.\arabic{remark}}
\renewcommand{\theexample}{\arabic{section}.\arabic{example}}
\renewcommand{\theequation}{\arabic{section}.\arabic{equation}}

\makeatletter
\renewenvironment{thebibliography}[1]
 {\section*{\centerline{\rm\textsc{Bibliography}}}%
 \@mkboth{\MakeUppercase\refname}{\MakeUppercase\refname}%
 \list{\@biblabel{\@arabic\c@enumiv}}%
 {\settowidth\labelwidth{\@biblabel{#1}}%
 \leftmargin\labelwidth
 \advance\leftmargin\labelsep
 \@openbib@code
  \usecounter{enumiv}%
  \let\p@enumiv\@empty
  \renewcommand\theenumiv{\@arabic\c@enumiv}}%
  \sloppy
  \clubpenalty4000
  \@clubpenalty \clubpenalty
  \widowpenalty4000%
  \sfcode`\.\@m
  \setlength{\itemsep}{-0.1cm}}
  {\def\@noitemerr
  {\@latex@warning{Empty 'thebibliography' environment}}%
 \endlist}
\renewcommand{\@biblabel}[1]{#1.}

\makeatother \hfuzz=0.5pt \tolerance=500

\begin{document}
\begin{center}
		\textbf{Regularized summation of Fourier–Laguerre series\\ in the weighted sup-norm}
\end{center}
\vspace*{10mm} \centerline{\textsc {S.\,G.\,Solodky $\!\!{}^{\dag}$, Y.\,A.\,Volynets $\!\!{}^{\ddag}$}}

\vspace*{5mm}
\centerline{$\!\!{}^{\dag}\!\!$ Institute of Mathematics, National Academy of Sciences of Ukraine, Kyiv}
\centerline{$\!\!{}^{\ddag}\!\!$ National University of Kyiv-Mohyla Academy, Kyiv, Ukraine}

\begin{abstract}
{\bf Анотація.} Розглядається задача відновлення функцій, заданих на півпрямій, за неточною вхідною інформацією. Для запропонованих регуляризуючих методів підсумовування, побудованих на основі поліномів Лаґерра, досліджуються їхні апроксимаційні властивості на класах Вінера. Встановлюються умови, за яких запропоновані методи є стійкими щодо малих збурень вхідних даних та оптимальними за порядком як за точністю у зваженій $\sup$-нормі, так і за кількістю використаних коефіцієнтів Фур’є—Лаґерра.

\vspace*{2mm}

We consider the problem of recovering functions defined on the half-line from inexact input information.
For the proposed regularizing summation methods based on Laguerre polynomials, we analyze their approximation properties
on functions from Wiener classes.
We establish conditions under which these methods are stable with respect to small perturbations of the input data and
order-optimal not only in accuracy in the weighted $\sup$-norm,
but also in the number of Fourier--Laguerre coefficients used.

\vspace*{3mm}

\noindent{\textit{Key words:}
Numerical summation, regularization, Laguerre polynomials,
summation method, Wiener class, information complexity, optimal recovery}
\vspace*{3mm}

\noindent\textit{2020 Mathematics Subject Classification:} Primary: 65J20; Secondary: 41A25, 42C10, 65D15, 65Y20.
\end{abstract}

\vphantom{AAA}
\\

\section{Introduction. Preliminaries}  \label{prelim}
This paper is devoted to the optimal recovery of functions on the half-line by regularizing summation methods
based on Laguerre polynomials when the input information is inexact.
This problem arises in many applications in which our results may be useful. In particular,
in the study of time-dependent processes and in signal analysis,
when one needs to recover, for example, the response of a system after an impulse, a waiting-time distribution in a queue, or
a relaxation curve, Laguerre sums provide a convenient representation on the entire half-line and account for decay
at infinity \cite{Abate}, \cite{Wang}. For heat conduction, diffusion, radial transport equations, and
other boundary-value problems on the half-line, Laguerre polynomials avoid truncating the domain at an artificially
chosen point and imposing an artificial condition there \cite{Shen}, \cite{LWL}. They are also useful for approximating decaying functions
when a uniform approximation on the half-line, rather than values only on a finite grid, is required \cite{OcTh}.

There is now an extensive literature on various aspects of function approximation
using Laguerre polynomials (see, for example, \cite{Erd}, \cite{Muck70_1}, \cite{St}, \cite{MaMi},\cite{Shen},
\cite{Wang},  \cite{OcTh}).
Nevertheless, to the best of our knowledge, optimal recovery for the Laguerre system on the half-line with inexact
input information has not been considered previously.
We propose below a class of regularizing summation methods that are optimal in terms of
information complexity;
that is, they attain the best order of accuracy while using the smallest, in order, number of
perturbed Fourier--Laguerre coefficients. Similar problems have previously been studied, including
by one of the authors of this paper, for systems of polynomials orthogonal on a finite interval, in particular,
for Jacobi polynomials and trigonometric polynomials (see, e.g.,
\cite{Sol_Stas_JC2020}, \cite{Sol_Stas_JC2024}, \cite{Sem_Sol_2024}, \cite{Sem_Sol_2026}, \cite{Kys}).
In this paper, the ideas and concepts of this approach are extended to the half-line and the Laguerre polynomial system.

We also emphasize two facts established below alongside the main results
that are of independent interest beyond the optimal recovery problem.
First, for the error of the partial Fourier--Laguerre sums on the class $W^{\mu}_{s}$
in the weighted $\sup$-norm, we establish the order-sharp two-sided estimate
$\varepsilon_N (W^{\mu}_{s}) \asymp N^{-\mu-1/s+1}$
(Corollary \ref{Cor_En}). This estimate is known in the trigonometric case
(see Remark \ref{rem_trig}), whereas its lower bound appears to be new in the Laguerre case: in the
trigonometric case, the proof relies on the Dirichlet kernel, while here the lower bound
for the block is obtained in Lemma \ref{Lemma BE}.
Second, the condition $\mu>1-1/s$, under which all results of the paper are stated,
is the exact threshold for finiteness of the problem: for $\mu<1-1/s$, the quantities under study
are infinite, whereas for $\mu=1-1/s$, they are bounded away from zero uniformly in $N$
(Remark \ref{rem_sharp}).

The article is organized as follows.
Section 1 provides some background information on the proposed concept that will be needed later.
In Section 2, for summation methods from the proposed class, their accuracy estimates
in the weighted sup-norm are established.
In Section 3, the problem statements
for optimizing numerical summation methods are given.
Section 4 presents the main results of the work with sharp (in order) estimates of the quantities under study.
In Section 5, numerical experiments are provided to confirm the effectiveness of the proposed methods.

For the further presentation of the material, we need the following notation and concepts.
Throughout the paper, let $\alpha\ge 0$ be real (not necessarily an integer) and let $t>0$.
On the half-line $(0,\infty)$, we use the weight $w(t)=t^\alpha e^{-t}$.

By $L_{2,w}=L_{2,w}(0, \infty)$ we mean a weighted Hilbert space of real-valued functions
$f(t)$ which are square-summable on $(0,\infty)$ with the weight $w$.
In $L_{2,w}(0, \infty)$ the inner product and norm are introduced in the standard way
$$
\langle f, g\rangle=\int_{0}^{\infty} t^{\alpha} e^{-t} f(t) g(t) \ d t .
$$
$$
\|f\|_{2}^2 := \int_{0}^\infty t^{\alpha} e^{-t} |f(t)|^2 \, d t .
$$
It is well known (see, for example, \cite[Ch.5]{Szego}) that the system of Laguerre polynomials $\{L_k^{(\alpha)}\}$
can be represented as
$$
L_k^{(\alpha)}(t)
= \sum_{m=0}^k \frac{\Gamma(k+\alpha+1)}{\Gamma(m+\alpha+1) (k-m)!} \, \frac{(-t)^m}{m!} , \quad k\in \mathbb{N}_0 ,
$$
where $\mathbb{N}_0=\{0\}\bigcup\mathbb{N}$.
Since
$$
\int_0^\infty \left(L_k^{(\alpha)}\right)^2 w dt = \Gamma(k+\alpha+1)/k! ,
$$
the system of Laguerre polynomials orthonormal with respect to the weight $w$ has the form
$$
\ell_k^{(\alpha)}(t) := \left(\frac{k!}{\Gamma(k+\alpha+1)}\right)^{1/2} L_k^{(\alpha)}(t) .
$$
Then the norm in $L_{2,w}$ can be written as
$$
\|f\|_{2}^2 =
\sum_{k=0}^{\infty}|\langle f, \ell_{k}^{(\alpha)} \rangle|^2 < \infty ,
$$
where $$ \langle f, \ell_{k}^{(\alpha)}\rangle=\int_{0}^{\infty} t^\alpha e^{-t} f (t) \ell_{k}^{(\alpha)} dt, \quad
k=0,1,2,\ldots,
$$
are Fourier-Laguerre coefficients of $f$.

We denote the Laguerre function by $\varphi_k^{(\alpha)}(t)$:
$$
\varphi_k^{(\alpha)}(t) = \ell_k^{(\alpha)}(t) \sqrt{w(t)} .
$$
By $C$ we will mean the space of real-valued functions continuous on $(0,\infty)$,
and for $g \in C$ we put $$ \|g\|_{C}:= \sup\limits_{t > 0} |g(t)| \in [0,+\infty] .$$
This quantity is used below only for weighted functions $g=h\sqrt{w}$, for which it is
finite.
Moreover, let
$\ell_p$, $1\leq p\leq\infty$, be the space of numerical sequences
$\mathbf{x}=\{x_{k}\}_{k\in\mathbb{N}_0}$, such that the corresponding
relation
$$
\|\mathbf{x}\|_{\ell_p}  := \left\{
\begin{array}{cl}
\bigg(\sum\limits_{k\in\mathbb{N}_0} |x_{k}|^p\bigg)^{\frac{1}{p}} < \infty ,
 \ & 1\leq p<\infty ,
\\\\
\sup\limits_{k\in\mathbb{N}_0}  |x_{k}| < \infty ,
  \ & p=\infty ,
\end{array}
\right.
$$
is finite.

We introduce the space of functions
$$
W_{s}^\mu =\{f\in L_{2,w}: \quad \|f\|_{s,\mu}^s :=\sum_{k=0}^{\infty} ({\underline{k}})^{s\mu}|\langle f,
\ell_{k}^{(\alpha)}\rangle|^s<\infty\},\quad 1\le s<\infty,
$$
$$
W_{\infty}^\mu =\{f\in L_{2,w}:
 \|f\|_{\infty,\mu} =
 \sup\limits_{k\in \mathbb{N}_0}\, (\max\{1,k\})^{\mu}\,
 |\langle f , \ell_{k}^{(\alpha)} \rangle | < \infty\},
$$
where $\mu>0$,\ $\underline{k}=\max\{1,k\}$,\ $k=0,1,2,\dots$.
In what follows, we use the same notation for a space and its unit ball:
$W_{s}^{\mu} = \{f\in W_{s}^{\mu}\!: \|f\|_{s,\mu} \leq 1\}$, which we call
a function class. Whether $W_{s}^{\mu}$ denotes the space or the class will be clear
from the context. These spaces/classes are commonly studied in approximation theory
and are known as weighted (generalized) Wiener classes (see \cite{Kolom2023}).

We represent a function $f\in W_{s}^{\mu}$ as
\begin{equation}\label{function}
	f(t) = \sum_{k=0}^{\infty}\langle f, \ell^{(\alpha)}_{k}\rangle \ell^{(\alpha)}_{k}(t).
\end{equation}
For every $f \in L_{2,w}$ the series in (\ref{function}) converges in the norm
of $L_{2,w}$, and (\ref{function}) is understood as an equality in $L_{2,w}$.
Moreover, for $\mu>1-1/s$ one has
$\sum_{k=0}^{\infty}\bigl|\langle f, \ell^{(\alpha)}_{k}\rangle\bigr|<\infty$
for every $f\in W^{\mu}_{s}$ (this estimate is contained in the proof of
Lemma~\ref{lemma_BoundT1} below), while
$\|\varphi_k^{(\alpha)}\|_C \le C_\alpha$ for all $k$ (see (\ref{varphi}) below).
Hence the series
$\sum_{k=0}^{\infty}\langle f, \ell^{(\alpha)}_{k}\rangle\, \varphi^{(\alpha)}_{k}(t)$
converges absolutely and uniformly on $[0,\infty)$, so that the function
$f\sqrt{w}$ has a continuous representative on $[0,\infty)$, with which it is
identified throughout the paper. In particular, the norm $\|f \sqrt{w}\|_{C}$
is well defined for $f\in W^{\mu}_{s}$, and the equality (\ref{function}) holds
at every point $t>0$.

We now assume that instead of $\langle f,   \ell^{(\alpha)}_{k}\rangle $ we are given perturbed values.
Specifically, suppose we are given a sequence of real numbers
$
\mathbf{f}^\delta = \left( \langle f^\delta,   \ell^{(\alpha)}_{k} \rangle \right)_{k \in \mathbb{N}_0}.
$
We define the error sequence ${\mathbf{\mbox{\boldmath$\xi$\unboldmath}}}= \left(\xi_k\right)_{k \in \mathbb{N}_0}$, where
$
\xi_k = \langle f - f^\delta,   \ell^{(\alpha)}_{k} \rangle,
$
and  assume that   the input error is measured in the $\ell_p$ norm,
i.e. for some $1 \leq p \leq \infty$, the following perturbation condition holds
\begin{equation} \label{perturbation2}
	\|{\mathbf{\mbox{\boldmath$\xi$\unboldmath}}}\|_{\ell_p} \leq \delta, \quad 0 < \delta < 1.
\end{equation}

We shall study regularizing properties of summation methods
\begin{equation} \label{ModVer}
S_N^\nu f^\delta (t) = \sum_{k=0}^{N}
\nu_k^N \langle f^\delta,   \ell^{(\alpha)}_{k}\rangle
\ell^{(\alpha)}_{k}(t)
\end{equation}
for certain triangular arrays $\nu=\{\nu_k^N\}$, $k=0,\ldots,N$, $N\in \mathbb{N}$.
Such summation methods are called $\theta$-methods, see, e.g. \cite{Mathe&Per}, and play a
stabilizing role in the direct (well-posed) problem of recovery.
The quality of the summation methods $S_N^\nu f^\delta (t)$ will depend on the
truncation level $N$ and on properties of $\nu=\{\nu_k^N\}$, $k=0,\ldots,N$,
more precisely we shall assume, that there is $C(\nu)>0$ and some $\theta>0$ such that
\begin{equation} \label{qual}
|1- \nu_k^N| \le C(\nu) \left(\frac{k}{N}\right)^\theta,\quad 0^\theta=0, \quad  k=0,1,\ldots,N .
\end{equation}
By $\mathcal{S}^\theta$ we denote the class of all summation methods (\ref{ModVer}) that satisfies (\ref{qual}).
We note in passing, that as
a consequence, the array $\nu$ is uniformly bounded.
Throughout the paper, $c$ denotes a positive constant that may depend on
$\alpha$, $\mu$, $s$, $\theta$ and on the constant $C(\nu)$ of (\ref{qual}),
but never on $N$, $\delta$, $f$, $f^\delta$ or on the error sequence
${\mathbf{\mbox{\boldmath$\xi$\unboldmath}}}$; its value may change from line
to line. Accordingly, all the estimates below are stated for a method of
$\mathcal{S}^\theta$ fixed in advance, the dependence on the method being
only through $C(\nu)$; they are not claimed to be uniform over the whole
class $\mathcal{S}^\theta$, on which $C(\nu)$ is unbounded.
The following examples
show that assumption (\ref{qual}) is rather natural.

\begin{example}   \label{Ex1} \rm
Here are some examples of methods from the class $\mathcal{S}^\theta$.
\begin{enumerate}
\item
The Fej\'{e}r method of summation, where $\nu_k^N:= 1-\frac{k}{N+1}$, $k\le N$, meets (\ref{qual})
with $\theta=1$.
\item
The Abel-Poisson method of summation, where $\nu_k^N:= e^{-k/N}$, $k\le N$, meets (\ref{qual})
with $\theta=1$.
\item
The Gauss-Weierstrass method of summation, where $\nu_k^N:= e^{-k^2/N^2}$, $k\le N$, meets (\ref{qual})
with $\theta=2$.
\item
The Zygmund method of summation, where $\nu_k^N:= 1-(\frac{k}{N})^\sigma$, $\sigma>0$, $k\le N$, meets (\ref{qual})
with $\theta=\sigma$.
\item
The Fourier method of summation, where $\nu_k^N:= 1$, $k\le N$, meets (\ref{qual})
for every $\theta>0$.
\item
The de la Vall\'{e}e Poussin method of summation. Let $N=2n$, then $\nu_k^{2n}:= 1$, $k\le n$,
$\nu_k^{2n}:= (2n-k)/n$, $n+1\le k\le 2n$.
The method meets (\ref{qual}) for every finite $\theta>0$ with $C(\nu)=2^\theta$.
\end{enumerate}
\end{example}

\section{Error estimate}  \label{EE}
Let us write the error of the methods (\ref{ModVer}) as
\begin{equation}\label{fullError}
f(t)-S_N^\nu f^\delta(t)= \left(f(t)-S_N f(t)\right)+
\left(S_N f(t)-S_N^{\nu}f(t)\right)+\left(S_N^{\nu} f(t)-S_N^{\nu} f^\delta(t)\right),
\end{equation}
where
\begin{equation}  \label{m_F}
S_N f(t) = \sum_{k=0}^{N} \langle f, \ell^{(\alpha)}_{k}\rangle
\ell^{(\alpha)}_{k}(t)
\end{equation}
- is the Fourier summation method.

Since the error of method (\ref{ModVer}) diverges in the standard $C$-metric, we estimate in $C$
the difference
$$
\left(f(t)-S_N^\nu f^\delta(t)\right)\sqrt{w(t)} = \left(f(t)-S_N f(t)\right) \sqrt{w(t)}
$$
\begin{equation}\label{fullErrorW}
+ \left(S_N f(t)-S_N^{\nu}f(t)\right) \sqrt{w(t)} + \left(S_N^{\nu} f(t)-S_N^{\nu} f^\delta(t)\right) \sqrt{w(t)} ,
\end{equation}
that is, we estimate the error of method (\ref{ModVer}) in the weighted $\sup$-norm.

The parameter $N$ in (\ref{fullError}) should be chosen depending on
$\delta$, $p$, $s$ and $\mu$ so as to minimize the error estimate for (\ref{ModVer}).

The notation $A\preceq B$ means $A\le c B$, $A\succeq B$ means $A\ge c B$, $A\asymp B$ means
$A\preceq B\preceq A$.
Moreover, the notation $A\ll B$ means $A=o(B)$, $A\gg B$ means $B=o(A)$.

An upper bound for the norm of the first difference
on the right-hand side of (\ref{fullErrorW}) is contained in the following statement.

\vskip 2mm

\begin{lemma}\label{lemma_BoundT1}
Let $f\in W^\mu_{s}$, $1\leq s \le \infty$, $\mu>1-1/s$.
Then, for method (\ref{m_F}), we have
$$
\|(f-S_N f)\sqrt{w}\|_{C}\leq c\|f\|_{s,\mu} N^{-\mu-1/s+1} .
$$
\end{lemma}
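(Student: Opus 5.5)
The plan is to use the uniform boundedness of the Laguerre functions in the sup-norm and then apply H\"older's inequality to the tail of the coefficient sequence. The paper refers to the bound $\|\varphi_k^{(\alpha)}\|_C\le C_\alpha$ for all $k$ (to be recorded as (\ref{varphi})); this is a classical consequence of the Szeg\H{o}/Erd\'elyi-type uniform estimates for Laguerre functions with $\alpha\ge 0$. I would take it as given. By the pointwise validity of (\ref{function}) discussed in Section~\ref{prelim}, for every $t>0$ we have
$$
(f(t)-S_Nf(t))\sqrt{w(t)}=\sum_{k=N+1}^{\infty}\langle f,\ell_k^{(\alpha)}\rangle\,\varphi_k^{(\alpha)}(t).
$$
Hence
$$
\|(f-S_Nf)\sqrt{w}\|_C\le C_\alpha\sum_{k=N+1}^\infty|\langle f,\ell_k^{(\alpha)}\rangle|.
$$
The absolute convergence of this series, claimed in the introduction, will drop out of the same computation.

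Next I would estimate the tail in $\ell_1$ by H\"older's inequality with exponents $s$ and $s'=s/(s-1)$. Writing $|\langle f,\ell_k^{(\alpha)}\rangle|=k^{\mu}|\langle f,\ell_k^{(\alpha)}\rangle|\cdot k^{-\mu}$ gives
$$
\sum_{k>N}|\langle f,\ell_k^{(\alpha)}\rangle|\le \|f\|_{s,\mu}\Bigl(\sum_{k>N}k^{-\mu s'}\Bigr)^{1/s'}.
$$
The condition $\mu>1-1/s$ is exactly $\mu s'>1$. So the tail sum is bounded by an integral comparison, $\sum_{k>N}k^{-\mu s'}\le \int_N^\infty x^{-\mu s'}dx=N^{1-\mu s'}/(\mu s'-1)$. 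Raising this to the power $1/s'=1-1/s$ yields $c\,N^{1/s'-\mu}=c\,N^{-\mu-1/s+1}$, with $c$ depending only on $\mu$ and $s$.

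The endpoint cases need separate treatment. For $s=1$ (so $s'=\infty$ and $\mu>0$), I would use $\sum_{k>N}|\langle f,\ell_k^{(\alpha)}\rangle|\le N^{-\mu}\sum_{k>N}k^\mu|\langle f,\ell_k^{(\alpha)}\rangle|\le N^{-\mu}\|f\|_{1,\mu}$. For $s=\infty$ (so $\mu>1$), I would bound each coefficient by $\|f\|_{\infty,\mu}k^{-\mu}$ and sum to get $cN^{1-\mu}$. Both agree with the exponent $-\mu-1/s+1$.

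The only nontrivial ingredient is the uniform bound on $\|\varphi_k^{(\alpha)}\|_C$ over all $k$ and all $t>0$. Everything else is elementary. If that bound were not available in the form stated, I would fall back on the classical asymptotics of $e^{-t/2}t^{\alpha/2}L_k^{(\alpha)}(t)$ across the regimes $t\lesssim 1/k$, the oscillatory range, the transition near $4k$, and $t\gg k$; these show boundedness precisely when $\alpha\ge0$. Since the paper already asserts (\ref{varphi}), I would simply cite it.
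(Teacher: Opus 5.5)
Your proposal is correct and follows the paper's proof essentially verbatim: the uniform bound (\ref{varphi}) on the Laguerre functions reduces the estimate to the $\ell_1$-tail of the coefficients, which you bound by H\"{o}lder with exponents $s$ and $s/(s-1)$ for $1<s<\infty$, and directly for $s=1$ and $s=\infty$, exactly as the paper does. Your explicit integral comparison $\sum_{k>N}k^{-\mu s'}\le N^{1-\mu s'}/(\mu s'-1)$ simply spells out the step the paper leaves implicit.
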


\textit{Proof.}
It is known (see, for example, \cite[formulas (2.2) and (2.5)]{Muck70_2}) that, for every $k$,
\begin{equation}  \label{varphi}
\|\varphi_k^{(\alpha)}\|_C \le C_\alpha ,
\end{equation}
where $C_\alpha$ depends on $\alpha$ and is independent of $k$.

First, let $1<s<\infty$.
Then using (\ref{varphi}), the H\"{o}lder inequality and the definition of
$W^\mu_{s}$, we obtain for $\mu>1-1/s$
$$
\|\left(f-S_N f\right) \sqrt{w}\|_C =
\|\sum_{k=N+1}^{\infty} \langle f, \ell^{(\alpha)}_{k}\rangle
\varphi^{(\alpha)}_{k}\|_C
\le \sup_{k>N} \|\varphi^{(\alpha)}_{k}\|_C \sum_{k=N+1}^{\infty} |\langle f, \ell^{(\alpha)}_{k}\rangle |
$$
$$
\le C_\alpha \left(\sum_{k=N+1}^{\infty} k^{\mu s}|\langle f, \ell^{(\alpha)}_{k}\rangle |^s\right)^{1/s}
\left(\sum_{k=N+1}^{\infty} k^{-\mu \frac{s}{s-1}}\right)^{(s-1)/s}
$$
$$
\le c \|f\|_{s,\mu} N^{-\mu-1/s+1} .
$$
In the case of $s=1$ we have
$$
\sum_{k>N} |\langle f, \ell^{(\alpha)}_{k}\rangle |
= \sum_{k>N} k^{-\mu}\, k^\mu\, |\langle f, \ell^{(\alpha)}_{k}\rangle |
\le N^{-\mu} \|f\|_{1,\mu} .
$$
For $s=\infty$ the tail of the series is estimated by an integral comparison
$\sum_{k>N} k^{-\mu} \le N^{1-\mu}/(\mu-1)$.\\
$\Box$

\vskip 2mm

The norm of the second difference
on the right-hand side of (\ref{fullErrorW}) is bounded in the following statement.

\begin{lemma}\label{lemma_BoundT2}
Let $f\in W^\mu_{s}$, $1\leq s \le \infty$.
Then, for any method from $\mathcal{S}^\theta$, $\theta>\mu+1/s-1$, we have
$$
\|(S_N f-S^{\nu}_N f)\sqrt{w}\|_{C}\leq c\|f\|_{s,\mu} N^{-\mu-1/s+1} .
$$
\end{lemma}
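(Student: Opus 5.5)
We write
$$
(S_N f - S_N^\nu f)\sqrt{w} = \sum_{k=1}^{N} (1-\nu_k^N)\langle f,\ell_k^{(\alpha)}\rangle \varphi_k^{(\alpha)} .
$$
The term $k=0$ drops out because (\ref{qual}) with $0^\theta=0$ forces $\nu_0^N=1$. Applying (\ref{varphi}) and (\ref{qual}) then gives
$$
\|(S_N f-S_N^\nu f)\sqrt{w}\|_C \le C_\alpha C(\nu) N^{-\theta}\sum_{k=1}^{N} k^{\theta}\,|\langle f,\ell_k^{(\alpha)}\rangle| .
$$
So everything reduces to a purely discrete estimate of the weighted sum $\sum_{k=1}^N k^\theta |\langle f,\ell_k^{(\alpha)}\rangle|$ in terms of $\|f\|_{s,\mu}$. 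This is the same scheme as in Lemma \ref{lemma_BoundT1}, except that the sum now runs over $k\le N$ with an increasing weight instead of over the tail.

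For $1<s<\infty$ we write $k^\theta|\langle f,\ell_k^{(\alpha)}\rangle| = k^{\theta-\mu}\cdot k^\mu|\langle f,\ell_k^{(\alpha)}\rangle|$ and apply Hölder's inequality with exponents $s$ and $s'=s/(s-1)$. This bounds the sum by
$$
\|f\|_{s,\mu}\Bigl(\sum_{k=1}^N k^{(\theta-\mu)s'}\Bigr)^{1/s'} .
$$
The hypothesis $\theta>\mu+1/s-1$ is exactly the condition $(\theta-\mu)s'>-1$. Under it, comparison with an integral gives $\sum_{k\le N}k^{(\theta-\mu)s'}\preceq N^{(\theta-\mu)s'+1}$, hence the Hölder factor is $\preceq N^{\theta-\mu+1-1/s}$. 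Multiplying by $N^{-\theta}$ yields the claimed order $N^{-\mu-1/s+1}$.

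The endpoint cases are handled the same way.
\begin{itemize}
\item For $s=1$ the condition reads $\theta>\mu$. Then $k^{\theta-\mu}\le N^{\theta-\mu}$ for $k\le N$, so the sum is at most $N^{\theta-\mu}\|f\|_{1,\mu}$, which gives $N^{-\mu}$.
\item For $s=\infty$ we have $|\langle f,\ell_k^{(\alpha)}\rangle|\le k^{-\mu}\|f\|_{\infty,\mu}$. The condition $\theta>\mu-1$ gives $\sum_{k\le N}k^{\theta-\mu}\preceq N^{\theta-\mu+1}$, hence the order $N^{1-\mu}$.
\end{itemize}

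The only delicate point is the integral comparison for the power sum $\sum_{k\le N}k^{\gamma}$ with $\gamma>-1$, which must be done uniformly in $N$. For $\gamma\ge0$ the bound $\sum_{k\le N}k^\gamma\le N^{\gamma+1}$ is trivial. For $-1<\gamma<0$ one uses $\sum_{k\le N}k^\gamma\le 1+\int_1^N x^\gamma\,dx\le N^{\gamma+1}/(\gamma+1)$. Both constants depend only on $\mu$, $s$ and $\theta$, which is consistent with the convention on $c$.

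Note that, unlike Lemma \ref{lemma_BoundT1}, no condition on $\mu$ relative to $1-1/s$ is needed here, since the sum is finite. The restriction $\theta>\mu+1/s-1$ is exactly what makes the weighted head sum grow like $N^{\theta}$ times the tail rate rather than faster.
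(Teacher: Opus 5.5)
Your proposal is correct and follows essentially the same route as the paper: bound the difference by $C(\nu)C_\alpha N^{-\theta}\sum_{k=1}^{N} k^{\theta}|\langle f,\ell_k^{(\alpha)}\rangle|$, then apply H\"older's inequality for $1<s<\infty$, take the supremum of $k^{\theta-\mu}$ for $s=1$, and use the power-sum bound $\sum_{k\le N}k^{\theta-\mu}\le cN^{\theta-\mu+1}$ for $s=\infty$. Your explicit justification that $\nu_0^N=1$ and your uniform estimate of $\sum_{k\le N}k^\gamma$ for $-1<\gamma<0$ only spell out details the paper leaves implicit.
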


\textit{Proof.}
First, let $1<s<\infty$. Using (\ref{qual}), (\ref{varphi}), the H\"{o}lder inequality and the definition of
$W^\mu_{s}$, we obtain
$$
\|(S_N f-S^{\nu}_N f) \sqrt{w}\|_C \le
\sum_{k=0}^{N} \left|1-\nu_k^N\right| |\langle f, \ell^{(\alpha)}_{k}\rangle |
\|\varphi^{(\alpha)}_{k}\|_C
$$
$$
\le C(\nu)\, C_\alpha\, N^{-\theta}\, \sum_{k=1}^{N} \, k^\mu \,|\langle f, \ell^{(\alpha)}_{k}\rangle |
\, k^{-\mu+\theta}
$$
$$
\le C(\nu)\, C_\alpha\, N^{-\theta}\, \left(\sum_{k=1}^{N} k^{\mu s}|\langle f, \ell^{(\alpha)}_{k}\rangle |^s\right)^{1/s}
\left(\sum_{k=1}^{N} k^{(-\mu+\theta) \frac{s}{s-1}}\right)^{(s-1)/s}
$$
$$
\le c \|f\|_{s,\mu} N^{-\mu-1/s+1} .
$$
In the case of $s=1$ we use the equality $\sup_{1\le k\le N} k^{\theta-\mu} = N^{\theta-\mu}$.
For $s=\infty$ we have $\theta>\mu-1$, i.e. $\theta-\mu>-1$, and apply the relation
$\sum_{k=1}^{N} k^{\theta-\mu} \le c N^{\theta-\mu+1}$.\\
$\Box$

\vskip 2mm
The norm of the third difference
on the right-hand side of (\ref{fullErrorW}) is bounded in the following statement.

\begin{lemma}\label{lemma_BoundT3}
Let $f\in L_{2,w}$, $1\leq p\le \infty$.
Then, for any method from $\mathcal{S}^\theta$, $\theta>0$, we have
$$
\|(S^{\nu}_N f-S^{\nu}_N f^\delta)\sqrt{w}\|_{C}\leq c\, \delta\, N^{1-1/p} .
$$
\end{lemma}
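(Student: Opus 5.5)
The plan is to write the third difference directly in terms of the error sequence and bound it term by term, as in the two preceding lemmas. By linearity of (\ref{ModVer}) and the definition $\xi_k=\langle f-f^\delta,\ell^{(\alpha)}_k\rangle$, we have
$$
\bigl(S^{\nu}_N f-S^{\nu}_N f^\delta\bigr)(t)\sqrt{w(t)}=\sum_{k=0}^{N}\nu_k^N\,\xi_k\,\varphi^{(\alpha)}_k(t).
$$
The argument then needs two ingredients. The first is the uniform bound (\ref{varphi}) on the Laguerre functions, $\|\varphi^{(\alpha)}_k\|_C\le C_\alpha$. The second is the uniform boundedness of the array $\nu$, which follows from (\ref{qual}): for $0\le k\le N$,
$$
|\nu_k^N|\le 1+|1-\nu_k^N|\le 1+C(\nu)\left(\frac{k}{N}\right)^\theta\le 1+C(\nu).
$$
This is the only place where $\theta>0$ enters, and the constant obtained depends on $C(\nu)$, which is admissible by the convention on $c$.

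Taking the supremum over $t>0$ gives
$$
\|(S^{\nu}_N f-S^{\nu}_N f^\delta)\sqrt{w}\|_C\le (1+C(\nu))\,C_\alpha\sum_{k=0}^{N}|\xi_k|.
$$
It remains to bound the finite $\ell_1$-sum of the first $N+1$ errors by the $\ell_p$ norm.

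\begin{itemize}
\item For $1<p<\infty$, apply H\"older's inequality with exponents $p$ and $p/(p-1)$:
$$
\sum_{k=0}^{N}|\xi_k|\le\Bigl(\sum_{k=0}^{N}|\xi_k|^p\Bigr)^{1/p}(N+1)^{1-1/p}\le\delta\,(N+1)^{1-1/p}.
$$
\item For $p=1$, the sum is at most $\|\boldsymbol{\xi}\|_{\ell_1}\le\delta$.
\item For $p=\infty$, the sum is at most $(N+1)\delta$.
\end{itemize}

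Finally, since $N\ge1$, we have $(N+1)^{1-1/p}\le 2N^{1-1/p}$, which gives the stated bound $c\,\delta\,N^{1-1/p}$.

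No step here is a genuine obstacle; the estimate is elementary. The only point that needs care is that the bound on $|\nu_k^N|$ must be taken from (\ref{qual}) and is therefore not uniform over $\mathcal{S}^\theta$. This matches the convention stated in Section~\ref{prelim}. We also note that the hypothesis $f\in L_{2,w}$ is used only to make the coefficients well defined; no smoothness of $f$ is needed.
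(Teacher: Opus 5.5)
Your proposal is correct and follows the paper's proof essentially line by line. It uses the same reduction to $(1+C(\nu))\,C_\alpha\sum_{k=0}^{N}|\xi_k|$ via the bound on $|\nu_k^N|$ and the uniform bound on the Laguerre functions, the same three cases (H\"older for $1<p<\infty$, direct bounds for $p=1$ and $p=\infty$), and the same final step $(N+1)^{1-1/p}\le 2N^{1-1/p}$.
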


\textit{Proof.}
Using (\ref{perturbation2}), (\ref{qual}), (\ref{varphi}),
we obtain
$$
\|(S^{\nu}_N f-S^{\nu}_N f^\delta) \sqrt{w}\|_C \le
\sum_{k=0}^{N} \left|\nu_k^N\right| \left|\langle f - f^\delta, \ell^{(\alpha)}_{k}\rangle\right|
\|\varphi^{(\alpha)}_{k}\|_C
$$
$$
\le (1+C(\nu)) C_\alpha\, \sum_{k=0}^{N} |\xi_k|,\quad  \xi_k=\langle f - f^\delta, \ell^{(\alpha)}_{k}\rangle.
$$
First, let $1<p<\infty$. Applying the H\"{o}lder inequality with $p'=p/(p-1)$, we obtain
$$
\sum_{k=0}^{N} |\xi_k|
\le \left(\sum_{k=0}^{N} |\xi_k|^p\right)^{1/p} (N+1)^{1/p'}
\le \|{\mathbf{\mbox{\boldmath$\xi$\unboldmath}}}\|_{\ell_p} (N+1)^{1-1/p} .
$$
For $p=1$ the conjugate exponent $p'$ is not defined, and the same bound is
immediate, since $1-1/p=0$:
$$
\sum_{k=0}^{N} |\xi_k| \le \|{\mathbf{\mbox{\boldmath$\xi$\unboldmath}}}\|_{\ell_1}
\le \delta = \delta\, (N+1)^{1-1/p} .
$$
For $p=\infty$ we have $1/p=0$ and, again directly,
$$
\sum_{k=0}^{N} |\xi_k| \le (N+1)\, \|{\mathbf{\mbox{\boldmath$\xi$\unboldmath}}}\|_{\ell_\infty}
\le \delta\, (N+1)^{1-1/p} .
$$
Since $(N+1)^{1-1/p} \le 2\, N^{1-1/p}$ for every $N\ge 1$ and every
$1\le p\le\infty$, in all three cases
$$
\sum_{k=0}^{N} |\xi_k| \le 2 \delta\, N^{1-1/p} .
$$
This proves the lemma.\\
$\Box$

\vskip 2mm

The combination of Lemmas \ref{lemma_BoundT1}, \ref{lemma_BoundT2} and \ref{lemma_BoundT3} gives
\begin{theorem} \label{Th_up}
Let $f\in W^\mu_{s}$, $\|f\|_{s,\mu}\le 1$, $1\leq s \le \infty$, $\mu>1-1/s$,
and let the condition (\ref{perturbation2}) be satisfied for some $1\le p\le \infty$.
Then, for any method $S^{\nu}_N$ from $\mathcal{S}^\theta$, $\theta>\mu+1/s-1$,
and $N\asymp \delta^{-\frac{1}{\mu-1/p+1/s}}$, we have
 $$
\|(f-S^{\nu}_N f^\delta) \sqrt{w}\|_{C} \leq c \delta^{\frac{\mu+1/s-1}{\mu-1/p+1/s}}
\asymp N^{-\mu-1/s+1}  .
$$
\end{theorem}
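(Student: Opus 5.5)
The plan is to combine the three lemmas through the decomposition (\ref{fullErrorW}), and then balance the approximation and noise terms by the choice of $N$. By the triangle inequality in $C$ applied to (\ref{fullErrorW}),
$$
\|(f-S^{\nu}_N f^\delta)\sqrt{w}\|_C \le \|(f-S_Nf)\sqrt{w}\|_C+\|(S_Nf-S^\nu_Nf)\sqrt{w}\|_C+\|(S^\nu_Nf-S^\nu_Nf^\delta)\sqrt{w}\|_C .
$$
The three terms are bounded as follows.

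\begin{itemize}
\item The first term is controlled by Lemma \ref{lemma_BoundT1}, which applies because $\mu>1-1/s$.
\item The second term is controlled by Lemma \ref{lemma_BoundT2}, which applies because $\theta>\mu+1/s-1$.
\item The third term is controlled by Lemma \ref{lemma_BoundT3}, using (\ref{perturbation2}).
\end{itemize}

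Since $\|f\|_{s,\mu}\le1$, this gives
$$
\|(f-S^{\nu}_N f^\delta)\sqrt{w}\|_C\le c\left(N^{-\mu-1/s+1}+\delta N^{1-1/p}\right).
$$

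Next I would substitute $N\asymp\delta^{-1/(\mu-1/p+1/s)}$. First I check that the exponent is positive: $\mu-1/p+1/s>\mu+1/s-1>0$, so $N\to\infty$ as $\delta\to0$ and the choice is admissible. For $\delta$ small this also ensures $N\ge1$. For $\delta$ bounded away from zero (recall $\delta<1$), the constants absorb the difference, and I would fix $N=\max\{1,\lceil\cdot\rceil\}$ to handle this.

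With this choice, $\delta\asymp N^{-(\mu-1/p+1/s)}$, so
$$
\delta N^{1-1/p}\asymp N^{-\mu-1/s+1/p+1-1/p}=N^{-\mu-1/s+1}.
$$
The two terms are therefore of the same order, which is $N^{-\mu-1/s+1}$. Rewriting this in terms of $\delta$ gives $\delta^{(\mu+1/s-1)/(\mu-1/p+1/s)}$.

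There is no real obstacle in this argument. The only points needing care are that the constants hidden in $N\asymp\cdot$ propagate only into $c$, and that the constants in the lemmas depend on $C(\nu)$ but not on $N$, $\delta$, or $f$, in accordance with the paper's convention.
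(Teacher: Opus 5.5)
Your proposal is correct and follows the paper's own proof: the triangle inequality on (\ref{fullErrorW}), Lemmas \ref{lemma_BoundT1}, \ref{lemma_BoundT2} and \ref{lemma_BoundT3} to obtain $c\,(N^{-\mu-1/s+1}+\delta N^{1-1/p})$, and then the rule $N\asymp\delta^{-1/(\mu-1/p+1/s)}$ to balance the two terms. One small slip: $\mu-1/p+1/s\ge\mu+1/s-1$ holds with equality when $p=1$, so the inequality is not strict, although positivity of the exponent (and hence your argument) is unaffected.
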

\textit{Proof.}  Taking into account  Lemmas \ref{lemma_BoundT1}, \ref{lemma_BoundT2} and \ref{lemma_BoundT3},
equation (\ref{fullError}) gives
$$
\|(f-S^{\nu}_N f^\delta) \sqrt{w}\|_{C} \leq \|(f-S_N f) \sqrt{w}\|_{C} +
\|(S_N f - S^{\nu}_N f) \sqrt{w}\|_{C} + \|(S^{\nu}_N f-S^{\nu}_N f^\delta) \sqrt{w}\|_{C}
$$
$$
\leq c\, N^{-\mu-1/s+1} + c\, \delta N^{1-1/p}
= c\, N \left(N^{-\mu-1/s} + \delta N^{-1/p}\right) .
$$
Substituting the rule $N\asymp \delta^{-\frac{1}{\mu-1/p+1/s}}$ into the relation above completely proves Theorem.\\
$\Box$
\vskip 2mm

\begin{corollary} \label{Cor1}
In the considered problem, any summation method (\ref{ModVer})  from $\mathcal{S}^\theta$,
$\theta>\mu+1/s-1$, achieves the accuracy
$$
 O\Big(\delta^{\frac{\mu+1/s-1}{\mu-1/p+1/s}}\Big)
$$
on the class $W^{\mu}_{s}$, $\mu>1-1/s$, and requires
$$
\card(\{0,1,\ldots,N\}) \asymp
N \asymp \delta^{-\frac{1}{\mu-1/p+1/s}}
$$
perturbed Fourier-Laguerre coefficients.
\end{corollary}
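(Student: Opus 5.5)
The corollary is a direct reformulation of Theorem \ref{Th_up}, so the plan is to read off both claims from that theorem and its proof. First, fix any method $S^\nu_N\in\mathcal{S}^\theta$ with $\theta>\mu+1/s-1$ and any $f$ in the class $W^\mu_s$, that is, $\|f\|_{s,\mu}\le1$ with $\mu>1-1/s$. Fix also any perturbed data satisfying (\ref{perturbation2}). Choose the truncation level $N\asymp\delta^{-\frac{1}{\mu-1/p+1/s}}$; concretely one may take $N=\lceil\delta^{-\frac{1}{\mu-1/p+1/s}}\rceil$. Theorem \ref{Th_up} then gives
$$
\|(f-S^\nu_N f^\delta)\sqrt{w}\|_C\le c\,\delta^{\frac{\mu+1/s-1}{\mu-1/p+1/s}} .
$$
The constant $c$ depends only on $\alpha,\mu,s,\theta,C(\nu)$, hence not on $f$ or $\xi$. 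Taking the supremum over the class and over admissible $\xi$ therefore yields the accuracy $O(\delta^{\frac{\mu+1/s-1}{\mu-1/p+1/s}})$.

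The only point to check here is that the exponent $\mu-1/p+1/s$ is positive, so that the choice of $N$ makes sense and $N\to\infty$ as $\delta\to0$. This follows from $\mu>1-1/s$: indeed $\mu+1/s-1/p>1-1/p\ge0$. The same inequality shows the accuracy exponent $\mu+1/s-1$ is positive. To be safe I would also briefly re-verify the balancing in the theorem's proof. With this $N$,
$$
N^{-\mu-1/s+1}\asymp\delta^{\frac{\mu+1/s-1}{\mu-1/p+1/s}}
\quad\text{and}\quad
\delta N^{1-1/p}\asymp\delta^{1-\frac{1-1/p}{\mu-1/p+1/s}}=\delta^{\frac{\mu+1/s-1}{\mu-1/p+1/s}},
$$
so both terms are of the same order.

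Second, for the information count, observe that the method (\ref{ModVer}) uses exactly the perturbed coefficients $\langle f^\delta,\ell^{(\alpha)}_k\rangle$, $k=0,\dots,N$. Their number is $N+1\asymp N$ for $N\ge1$, and substituting the chosen $N$ gives $\card(\{0,\dots,N\})\asymp\delta^{-\frac{1}{\mu-1/p+1/s}}$.

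There is no real obstacle. The only subtlety is ensuring that $N\ge1$ and that rounding $N$ to an integer preserves the $\asymp$ relation. Both follow since $\delta<1$ makes $\delta^{-\frac{1}{\mu-1/p+1/s}}>1$.
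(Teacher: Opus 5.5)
Your proposal is correct and follows the same route as the paper, which states Corollary~\ref{Cor1} as an immediate consequence of Theorem~\ref{Th_up} with no separate argument: the accuracy is read off from that theorem with $N\asymp\delta^{-\frac{1}{\mu-1/p+1/s}}$, and the method uses the $N+1\asymp N$ coefficients with indices $0,\dots,N$. Your additional checks are accurate and only make explicit what the paper leaves implicit; these are the positivity of $\mu-1/p+1/s$, the balancing $N^{-\mu-1/s+1}\asymp\delta N^{1-1/p}$, and the fact that rounding $N$ up to an integer preserves $\asymp$ because $\delta<1$.
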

\vskip 2mm

\begin{remark} \label{prior}  \rm
Methods from $\mathcal{S}^\theta$ were previously studied for numerical summation in \cite{Mathe&Per},
\cite{Shar}, \cite{Sol&Shar}, \cite{PoPo}. In particular, for $s=p=2$, the order $\delta^{(\mu-1/2)/\mu}$ and the rule
$N\asymp\delta^{-1/\mu}$ were obtained in \cite{Mathe&Per} (Theorem 3.1 with $\beta=0$) for methods of degree
$\theta\ge\mu$. Here this condition is weakened to $\theta>\mu-1/2$, which extends the admissible range by $1/2$
(for the Fej\'{e}r method, $\mu<3/2$ instead of $\mu\le1$).
\end{remark}
\vskip 2mm

\begin{remark}   \label{rem_theta}   \rm
Theorem \ref{Th_up} shows that the parameter $\theta$ for methods from $\mathcal{S}^\theta$ determines
the greatest smoothness $\mu$ for which the accuracy
$O(\delta^{\frac{\mu+1/s-1}{\mu-1/p+1/s}})$ is guaranteed: the condition $\theta>\mu+1/s-1$ must hold.
Thus, for the methods in Example \ref{Ex1},
we have the following ranges of values of $\mu$ permitted by Theorem \ref{Th_up}:\\
-- the Fej\'{e}r and Abel--Poisson methods: $1-1/s<\mu<2-1/s$; \\
-- the Gauss--Weierstrass method: $1-1/s<\mu<3-1/s$; \\
-- the Zygmund method: $1-1/s<\mu<\sigma+1-1/s$; \\
-- the Fourier and de la Vall\'{e}e Poussin methods: the entire scale $\mu>1-1/s$. \\
The Fourier and de la Vall\'{e}e Poussin methods are the only methods in Example \ref{Ex1} for which Theorem \ref{Th_up} applies to
the entire smoothness scale $\mu>1-1/s$.
\end{remark}

\section{Optimal recovery and information complexity. Problem setting} \label{opt_Lq_alpha}

Our next goal is to show that methods (\ref{ModVer}) from $\mathcal{S}^\theta$ attain order-optimal estimates
both in accuracy and in information complexity.
For this purpose, we introduce below minimax quantities that characterize the optimality of
numerical summation methods on the class $W^{\mu}_{s}$.

Let $m$ be an arbitrary mapping from $L_{2,w}$ into $C$. By $\mathcal{M}$ we denote
the set of all such mappings.
The {\it optimal recovery error} on $W^{\mu}_{s}$ is defined as the quantity
$$
E_{\delta} \left( W^{\mu}_{s},  \mathcal{M}, C \right) =
\inf_{m \in \mathcal{M}} \
\sup_{\substack{f \in W_{s}^\mu}} \
\sup_{\substack{f^\delta \in L_{2,w} \\ \|f - f^\delta\|_{2} \leq \delta}} \
\left\| (f - m f^\delta) \sqrt{w} \right\|_{C} ,
$$
where the element $m f^\delta \in C$ serves as an approximate solution of the problem.
The quantity $E_{\delta}$ characterizes the best possible accuracy in recovering a function
$f\in W^{\mu}_{s}$ from an arbitrary $\delta$-perturbation $f^\delta \in L_{2,w}$.
The methods over which the $\inf$ is taken are not required to be linear, continuous, or stable.
Despite the fact that this quantity answers the question of the minimal approximation error, it ignores the question
of the form and amount of discrete information needed to achieve a given accuracy.

To address these questions, we introduce two further minimax quantities. They characterize
the optimality of numerical summation methods that use, as input information,
perturbed Fourier--Laguerre coefficients with errors measured in the $\ell_p$ norms.

Now, as in Section \ref{prelim}, assume that input error is measured in the $\ell_p$-norm.
In other words, we suppose that for some $1 \leq p \leq \infty$,
the perturbation condition (\ref{perturbation2}) holds.

An {\it information operator}
$ G : L_{2,w} \to \ell_2$
is defined as a mapping that assigns to each function $f^\delta \in L_{2,w}$ its sequence of Fourier–Laguerre coefficients:
\begin{equation} \label{FHc}
G(f^\delta) = \mathbf{f}^\delta .
\end{equation}
Such discrete information, expressed through Fourier coefficients as in \eqref{FHc}, is commonly referred
to as {\it Galerkin information} (see, e.g., \cite{Wer87}).

A {\it recovery operator} is defined as any mapping
$$
\psi : \ell_2 \to C,
$$
which assigns to a sequence $\mathbf{f}^\delta \in \ell_2$ an element $\psi \mathbf{f}^\delta \in C$.

The {\it approximation method} is any operator of the form
\begin{equation} \label{DM}
\psi G : L_{2,w} \to C,
\end{equation}
where $G$ is an information operator (as in \eqref{FHc}) and $\psi$ is a recovery operator.
The element $\psi G(f^\delta) \in C$ is interpreted as an approximation for  the
function $f \in W_{s}^\mu$.
We denote by $\Psi$ the set of all such operators of the form \eqref{DM}.
The only requirement for the methods in $\Psi$ is that they use {\it Galerkin information} (i.e., the Fourier–Laguerre coefficients)
as discrete data. The number of coefficients involved may, in general, be infinite.

The error of a method $\psi G$ on the class $W_{s}^\mu$ is given by
$$
e_{\delta}(W_{s}^\mu, \psi G, C, \ell_p) =
\sup_{\substack{f \in W_{s}^\mu}} \
\sup_{\substack{f^\delta \in L_{2,w}: \ \\ \|\mathbf{f^\delta} - \mathbf{f}\|_{\ell_p} \leq \delta}} \
\left\| (f - \psi G(f^\delta)) \sqrt{w} \right\|_{C}.
$$
We study the quantity
$$
\mathcal{E}_\delta (W_{s}^\mu, \Psi, C, \ell_p) =
\inf_{\psi G \in \Psi} e_{\delta}(W_{s}^\mu, \psi G, C, \ell_p),
$$
which represents the best possible accuracy achievable in approximating an arbitrary function
$f \in W_{s}^\mu$, using methods based only on perturbed Fourier–Laguerre coefficients measured in the $\ell_p$ norm.
Note, the amount of such input information used by methods from $\Psi$ can be arbitrarily large, even infinite.

We are also interested in the minimal amount of Galerkin information necessary to achieve a given level of accuracy.
To this end, we consider a finite set $\Omega \subset \mathbb{N}_0$, with $\card(\Omega) = N$,
where $\card(\Omega)$ denotes the number of points in $\Omega$.

We now introduce the operator
$$
G_\Omega : L_{2,w} \to \ell_2,
$$
which acts as follows: for each function $f \in L_{2,w}$, it produces a numerical sequence whose $k$-th component
(for $k \in \mathbb{N}_0$) is defined as
$
\langle f, \ell^{(\alpha)}_k \rangle \quad \text{if } k \in \Omega,
$
and is set to zero otherwise. That is, the operator retains only the Fourier–Laguerre coefficients indexed by the finite set
$\Omega$ and discards all others.

A {\it numerical summation algorithm} is then any operator of the form
$$
\psi G_\Omega : L_{2,w} \to C,
$$
where, as before, $ \psi : \ell_2 \to C$ is a recovery operator.
The set of all such algorithms is denoted by $\Psi_\Omega$.
Define
$$
\Psi_N := \bigcup_{\Omega : \,\,\card(\Omega) \le N} \Psi_\Omega,
$$
where the union is taken over all index sets $\Omega$ containing at most $N$ elements.
Clearly, $\Psi_N \subset \Psi$, since the algorithms in $\Psi_N$ are restricted to using no more than $N$ Fourier–Laguerre coefficients.
In contrast, the methods from $\Psi$ can deal with  any amount of Galerkin information.

The  {\it minimal  radius of Galerkin information } for the problem of numerical summation over the class $W^{\mu}_{s}$ is given by
$$
R_{N,\delta}  (W^{\mu}_{s}, \Psi_N, C, \ell_p) =
\inf_{ \psi G_\Omega \in \Psi_N} e_\delta(W^{\mu}_{s},  \psi G_\Omega, C, \ell_p) .
$$
This quantity characterizes the best possible accuracy that can be achieved by numerical summation of an arbitrary function $f \in W^{\mu}_{s}$, using at most $N$ values of its Fourier-Laguerre coefficients that are $\delta$-perturbed in the $\ell_p$ norm.

The inclusion $\Psi_N \subset \Psi \subset {\mathcal M}$ implies that, for every $N$ and $\delta>0$,
\begin{equation}   \label{low1}
\mathcal{E}_\delta (W_{s}^\mu, \Psi, C, \ell_p)
\le R_{N,\delta}  (W^{\mu}_{s}, \Psi_N, C, \ell_p) ,\quad 1\le p\le \infty,
\end{equation}
\begin{equation}   \label{low2}
E_{\delta} \left(W^{\mu}_{s},  \mathcal{M}, C \right)
\le \mathcal{E}_\delta (W_{s}^\mu, \Psi, C, \ell_2)
\le R_{N,\delta}  (W^{\mu}_{s}, \Psi_N, C, \ell_2) .
\end{equation}
In our analysis, the quantities $\mathcal{E}_\delta$ and $E_{\delta}$ are
auxiliary, and their values provide lower bounds for $R_{N,\delta}$.

Finally, we are also interested in finding a value $N_{\min}$ such that
$$
\min\limits_N R_{N,\delta}(W^{\mu}_{s},\Psi_N, C, \ell_p)
\asymp R_{N_{\min},\delta}(W^{\mu}_{s},\Psi_{N_{\min}}, C, \ell_p) .
$$
The quantity $N_{\min}$ gives the smallest number of perturbed Fourier--Laguerre coefficients
whose use guarantees the highest order of accuracy on the entire class $W^{\mu}_{s}$.
Thus, $N_{\min}$ can be regarded as the information complexity of numerical summation.

\section{Sharp bounds of optimal recovery and information complexity}
We will need the following auxiliary result.

\begin{lemma} \label{Lemma BE}
For every real $\alpha\ge 0$, every positive integer $N$, and every set $\Omega$
consisting of $N$ integer points in the interval $[N,3N]$, the function
$$
f(t)=\sum_{k\in \Omega}  \ell^{(\alpha)}_k(t)
$$
satisfies
$$
\|f \sqrt{w}\|_C \ge \bar{c}\, N,
$$
where $\bar{c}=\frac{0.7039}{\Gamma(\alpha+1)}\, 12^{-\alpha/2}$.
\end{lemma}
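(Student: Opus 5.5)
The plan is to bound $\|f\sqrt{w}\|_C$ from below by its value at one well-chosen point $t_N$ near the origin. There every $\varphi_k^{(\alpha)}$ with $k\in\Omega$ is positive and of size $\asymp 1$, so no cancellation occurs in the sum and the $N$ terms add up to $\succeq N$. The factor $12^{-\alpha/2}$ in $\bar c$ suggests the choice $t_N=\frac{1}{12N}$. Since every $k\in\Omega$ satisfies $N\le k\le 3N$, we get
$$
\frac{1}{12}\le k\,t_N\le \frac14 .
$$

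First, I write
$$
\varphi_k^{(\alpha)}(t)=\Bigl(\tfrac{k!}{\Gamma(k+\alpha+1)}\Bigr)^{1/2}L_k^{(\alpha)}(t)\,t^{\alpha/2}e^{-t/2}
$$
and use $L_k^{(\alpha)}(0)=\frac{\Gamma(k+\alpha+1)}{k!\,\Gamma(\alpha+1)}$, which is read off from the explicit sum in Section~\ref{prelim}. To compare $L_k^{(\alpha)}(t)$ with $L_k^{(\alpha)}(0)$ on $[0,t_N]$, I use two facts:
\begin{itemize}
\item the derivative identity $\frac{d}{dt}L_k^{(\alpha)}=-L_{k-1}^{(\alpha+1)}$;
\item the classical Szeg\H{o} bound $|L_{n}^{(\beta)}(t)|\le L_n^{(\beta)}(0)e^{t/2}$, valid for $\beta\ge 0$ and $t\ge0$ (\cite[Ch.~7]{Szego}).
\end{itemize}
Since $L_{k-1}^{(\alpha+1)}(0)=\frac{k}{\alpha+1}L_k^{(\alpha)}(0)$, the mean value theorem gives
$$
L_k^{(\alpha)}(t_N)\ \ge\ L_k^{(\alpha)}(0)\Bigl(1-\frac{k\,t_N\,e^{t_N/2}}{\alpha+1}\Bigr)\ \ge\ L_k^{(\alpha)}(0)\Bigl(1-\tfrac14 e^{1/24}\Bigr)>0 .
$$
The factor $e^{-t_N/2}\ge e^{-1/24}$ also stays bounded below. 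The numerical constant $0.7039$ should come out of these elementary factors. If it does not match exactly, I would sharpen the derivative estimate by an alternating-series bound on the first two terms of the explicit sum.

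Combining these bounds, each term satisfies
$$
\varphi_k^{(\alpha)}(t_N)\ \ge\ \frac{c_0}{\Gamma(\alpha+1)}\Bigl(\tfrac{\Gamma(k+\alpha+1)}{k!}\Bigr)^{1/2}t_N^{\alpha/2}
$$
with a numerical constant $c_0$. I then need $\Gamma(k+\alpha+1)/\Gamma(k+1)\ge k^{\alpha}$. For $\alpha\ge1$ this follows by peeling off factors $(k+\alpha)\cdots$ and reducing to $\alpha\in[0,1)$. For $\alpha\in[0,1)$ it follows from log-convexity of $\Gamma$ (Wendel/Gautschi-type inequality), which gives even $(k+1)^\alpha$-type bounds up to harmless factors. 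This yields
$$
\varphi_k^{(\alpha)}(t_N)\ \ge\ \frac{c_0}{\Gamma(\alpha+1)}(k t_N)^{\alpha/2}\ \ge\ \frac{c_0}{\Gamma(\alpha+1)}\,12^{-\alpha/2} .
$$
Summing over the $N$ indices of $\Omega$, all of which give positive terms, I get
$$
\|f\sqrt{w}\|_C\ \ge\ f(t_N)\sqrt{w(t_N)}\ \ge\ \bar c N .
$$

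The main obstacle is the uniform positivity step: controlling $L_k^{(\alpha)}(t)/L_k^{(\alpha)}(0)$ from below on $[0,1/(4k)]$ uniformly in $k$ and in every real $\alpha\ge0$, with an explicit constant. The route through the derivative identity and Szeg\H{o}'s bound for parameter $\alpha+1\ge0$ is what I would try first. If the explicit constant turns out awkward, a fallback is to estimate the tail of the explicit power series directly. The ratio of consecutive terms is at most $\frac{k\,t}{(m+1)(m+\alpha+1)}\le \frac14$, so the series is alternating with decreasing terms, and $L_k^{(\alpha)}(t)\ge L_k^{(\alpha)}(0)\bigl(1-\frac{kt}{\alpha+1}\bigr)\ge \frac34 L_k^{(\alpha)}(0)$, which is even cleaner.
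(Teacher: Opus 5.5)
Your proof is correct and has the same skeleton as the paper's. You use the same point $t_*=\frac{1}{12N}$, positivity of each $L_k^{(\alpha)}(t_*)$ with a uniform fraction of $L_k^{(\alpha)}(0)$, the bound $\Gamma(k+\alpha+1)/k!\ge k^{\alpha}$ combined with $k\,t_*\ge\frac1{12}$, and the factor $e^{-t_*/2}\ge e^{-1/24}$. The difference lies in the positivity step.

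\emph{Positivity step.} The paper bounds the whole tail of the explicit sum in absolute value, via $a_j/a_0\le (kt)^j/(j!)^2$ and $\sum_{j\ge1}4^{-j}/(j!)^2=I_0(1)-1$. This gives $L_k^{(\alpha)}(t_*)\ge 0.7339\,L_k^{(\alpha)}(0)$, hence the constant $0.7339\,e^{-1/24}\approx0.7039$.
\begin{itemize}
\item Your main route uses $\frac{d}{dt}L_k^{(\alpha)}=-L_{k-1}^{(\alpha+1)}$, the mean value theorem and Szeg\H{o}'s inequality for parameter $\alpha+1$. It is valid and gives $e^{-t_*/2}L_k^{(\alpha)}(t_*)\ge\bigl(e^{-1/24}-\tfrac14\bigr)L_k^{(\alpha)}(0)\approx0.7092\,L_k^{(\alpha)}(0)$. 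However, it imports an external inequality.
\item Your fallback is better than both. Since consecutive terms have ratio at most $k t_*\le\frac14$, the finite sum is alternating with decreasing terms, so $L_k^{(\alpha)}(t_*)\ge a_0-a_1\ge\frac34 a_0$. This yields the constant $\frac34e^{-1/24}\approx0.7194$. It is self-contained, simpler than the Bessel-function bound, and slightly sharper, because it exploits the alternation that the paper discards.
\end{itemize}
Both of your constants exceed $0.7039$. So your hedge ``if it does not match exactly'' is unnecessary; you should simply carry out this arithmetic.

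\emph{Gamma ratio.} You need no peeling and no ``harmless factors.'' With an explicit constant in the statement, such factors would not be harmless. Convexity of $\ln\Gamma$ says the secant slope on $[k+1,k+1+\alpha]$ is at least the slope on $[k,k+1]$, which equals $\ln k$. This gives $\Gamma(k+\alpha+1)/k!\ge k^{\alpha}$ exactly, for every $\alpha\ge0$ at once. It is the same fact as the paper's digamma argument ($\chi$ increasing and $\chi(k+1)\ge\ln k$).
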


The proof of Lemma \ref{Lemma BE} is given in Appendix A.
\vskip 2mm

Our immediate goal is to find lower bounds for the minimax quantities from Section \ref{opt_Lq_alpha}.
First, we consider the case when, instead of the exact values of the Fourier-Laguerre coefficients
of a function $f$, their perturbations $\mathbf{f^\delta}$ are known,
for which the estimate (\ref{perturbation2}) holds.

\begin{theorem} \label{Th_low1}
Let $1\leq s\le \infty$, $\mu>1-1/s$, $1\leq p \leq \infty$.
Then, for any $0<\delta<1$ and $N$, we have
\begin{equation}   \label{low_est_1}
R_{N,\delta}(W^{\mu}_{s}, \Psi_N, C, \ell_p)
\geq {\cal E}_{\delta}(W^{\mu}_{s}, \Psi, C, \ell_p)
\geq c\, \delta^{\frac{\mu+1/s-1}{\mu-1/p+1/s}} .
\end{equation}
\end{theorem}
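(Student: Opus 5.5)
The first inequality in (\ref{low_est_1}) is just (\ref{low1}), so only the lower bound for ${\cal E}_\delta$ needs proof. I will use the standard "two indistinguishable functions" argument. Suppose there is a function $g$ with $g\in W^\mu_s$ (so also $-g\in W^\mu_s$) and $\|\mathbf{g}\|_{\ell_p}\le\delta$, where $\mathbf{g}$ is the sequence of its Fourier--Laguerre coefficients. Take the perturbed data $f^\delta=0$; it is admissible both for $f=g$ and for $f=-g$. Hence any method $\psi G\in\Psi$ returns the same element $h=\psi G(0)$ in both cases, and
$$
e_\delta(W^\mu_s,\psi G,C,\ell_p)\ \ge\ \max\bigl\{\|(g-h)\sqrt w\|_C,\ \|(-g-h)\sqrt w\|_C\bigr\}\ \ge\ \|g\sqrt w\|_C .
$$
Since $\psi G$ is arbitrary, ${\cal E}_\delta\ge\|g\sqrt w\|_C$.

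For $g$ I take a block built from Lemma \ref{Lemma BE}: $g=a\sum_{k=n}^{2n-1}\ell_k^{(\alpha)}$, with $n$ integers in $[n,3n]$ and a scalar $a>0$. Then:
\begin{itemize}
\item the coefficient vector has $\|\mathbf g\|_{\ell_p}=a\,n^{1/p}$;
\item the smoothness norm satisfies $\|g\|_{s,\mu}\asymp a\,n^{\mu+1/s}$, with the usual interpretation for $s=\infty$;
\item by Lemma \ref{Lemma BE}, $\|g\sqrt w\|_C\ge\bar c\,a\,n$.
\end{itemize}
To satisfy both constraints I choose $a=c_1 n^{-\mu-1/s}$ with $c_1$ small enough that $\|g\|_{s,\mu}\le1$. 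The data condition $a n^{1/p}\le\delta$ then reads $c_1 n^{-\mu-1/s+1/p}\le\delta$. I take $n$ the smallest integer with this property, i.e.\ $n\asymp\delta^{-1/(\mu-1/p+1/s)}$. This exponent is well defined because $\mu-1/p+1/s>\mu+1/s-1>0$. The resulting bound is
$$
\|g\sqrt w\|_C\ \ge\ \bar c\,c_1\,n^{1-\mu-1/s}\ \asymp\ \delta^{\frac{\mu+1/s-1}{\mu-1/p+1/s}},
$$
which is the claimed estimate.

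The main points to check are minor:
\begin{itemize}
\item When $\delta$ is close to $1$, the minimal $n$ may be $1$. The bound is then still a constant multiple of $\delta^{(\cdot)}$, since $\delta<1$ and the exponent is positive.
\item Rounding $n$ to an integer changes the estimate only by constants.
\item The real content is the lower bound from Lemma \ref{Lemma BE}, which is taken as given. Without it, the sup of a block sum of Laguerre functions could a priori be much smaller than the number of terms, and that is exactly where the argument would otherwise fail.
\end{itemize}

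The admissibility of $f^\delta=0\in L_{2,w}$ is clear. The same construction works uniformly in $N$, because ${\cal E}_\delta$ does not depend on $N$.
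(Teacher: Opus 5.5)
Your proposal is correct and is essentially the paper's proof: the same block $\sum_{k=n}^{2n-1}\ell_k^{(\alpha)}$ scaled by $n^{-\mu-1/s}$, Lemma \ref{Lemma BE}, and the zero-perturbation argument with $\pm g$. The paper fixes $c_1=2^{-\mu}$ and $n=\lceil\tau\rceil$ with $2^{-\mu}\tau^{-\mu+1/p-1/s}=\delta$, which is exactly your smallest admissible $n$. One cosmetic fix: $\mu-1/p+1/s\ge\mu+1/s-1$ holds with equality when $p=1$, so the strict inequality should read $\ge$; positivity of the exponent is unaffected.
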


{\bf Proof.}
For an arbitrary $0<\delta<1$, we choose a real number $\tau>0$ so that
\begin{equation}  \label{N_1}
2^{-\mu}\, \tau^{-\mu+1/p-1/s} = \delta .
\end{equation}
Let us denote by $N_1$ the integer $N_1=\lceil\tau\rceil$ closest to $\tau$ from above, and construct an auxiliary function
\begin{equation}  \label{f_1}
f_1(t)  = 2^{-\mu} \,  N_1^{-\mu-1/s}\, \mathop{{\sum}}\limits_{k=N_1}^{2N_1-1}
  \ell^{(\alpha)}_k(t) .
\end{equation}
It is easy to see that for any $s<\infty$ $\|f_1\|_{s,\mu}\leq 1$.
If $s=\infty$ we obtain
$$
\|f_1\|_{\infty,\mu} = \max_{N_1\le k\le 2N_1-1} k^\mu \, 2^{-\mu}\, N_1^{-\mu} \le (2N_1)^\mu 2^{-\mu} N_1^{-\mu} =1 .
$$
Further, since
$$
\|\mathbf{f_1}\|_{\ell_p}^p
= 2^{-\mu p} \,  N_1^{-\mu p - p/s + 1} ,
$$
then by the choice of $N_1$, we have
\begin{equation}  \label{f_1=delta}
\|\mathbf{f_1}\|_{\ell_p}
= 2^{-\mu} \,  N_1^{-\mu +1/p - 1/s} \le \delta.
\end{equation}
Lemma \ref{Lemma BE} gives
$$
\left\|\left(\mathop{{\sum}}\limits_{k=N_1}^{2N_1-1}
  \ell^{(\alpha)}_k\right) \sqrt{w}\right\|_{C}
\geq \bar{c}\,  N_1
$$
and hence
\begin{equation}  \label{f_1_BE}
\|f_1 \sqrt{w}\|_C \geq 2^{-\mu}\, \bar{c}\, N_1^{-\mu-1/s+1} \geq
c\, \delta^{\frac{\mu+1/s-1}{\mu-1/p+1/s}} .
\end{equation}
Next, we will use a well-known scheme for obtaining lower bounds (see, for example, \cite{OsJC}).
Since $f_1, -f_1\in W^\mu_s$, equation (\ref{f_1=delta}) shows that
$\mathbf{0} =(0,\ldots,0,\ldots)\in \ell_p$ is a $\delta$-perturbation for $\mathbf{f_1}$ and
$-\mathbf{f_1}$ simultaneously.
Therefore, for any $\psi G\in \Psi$, we have
$$
2\|f_1 \sqrt{w}\|_{C}
\le \|(f_1 - \psi G(0)) \sqrt{w}\|_{C}
+ \|(-f_1 - \psi G(0)) \sqrt{w}\|_{C} .
$$
This implies
$$
\sup_{\genfrac{}{}{0pt}{}{\substack{\|f\|_{s,\mu}\leq 1},}
{\|\mathbf{f}\|_{\ell_p} \leq \delta} }
\| f \sqrt{w}\|_{C}
 \leq \sup_{\substack{\|f\|_{s,\mu}\leq 1}}
\ \sup_{\genfrac{}{}{0pt}{}{\substack{f^\delta\in L_{2,w}: \, \mathbf{f}^{\delta}=\mathbf{f}+\mathbf{\xi}}}
{\|\mathbf{\xi}\|_{\ell_p} \leq \delta} }
 \| (f - \psi G(f^{\delta})) \sqrt{w} \|_{C} .
$$
Since $\psi G\in \Psi$ is arbitrary, (\ref{f_1_BE}) gives
\begin{equation}  \label{E_delta}
{\cal E}_{\delta}(W^{\mu}_{s}, \Psi, C, \ell_p)
\geq \| f_1 \sqrt{w}\|_{C}
\geq c \, \delta^{\frac{\mu+1/s-1}{\mu-1/p+1/s}} .
\end{equation}
Substituting (\ref{E_delta}) into (\ref{low1}) proves the theorem.\\
$\Box$
\vskip 2mm

\begin{remark}  \rm
Theorem~\ref{Th_low1} is an analogue of Theorem~6.1 in \cite{Sem_Sol_2026}, where the same
quantities were evaluated for the Chebyshev system. There are three differences. First, that theorem assumes $1\le s<\infty$,
whereas the case $s=\infty$ is also included here. Second, for the Chebyshev system, the required lower
bound for a block follows immediately: $T_k(1)=\sqrt{2/\pi}$ for every $k$, so the entire block
is coherent at the single point $t=1$. For Laguerre functions, such a point exists only when
$\alpha=0$; the factor $t^{\alpha/2}$ removes it when $\alpha>0$. This
step is instead provided by Lemma~\ref{Lemma BE}, which holds for every $\alpha\ge 0$ and for
an arbitrary set $\Omega$ of $N$ integer points in the interval $[N,3N]$. Third,
the upper bounds rely on the uniform envelope
$\sup_k\|\ell^{(\alpha)}_k\sqrt{w}\|_C=O(1)$ on the half-line, which is trivial in the compact
case.
\end{remark}
\vskip 2mm

The construction from the proof of Theorem \ref{Th_low1} also yields a two-sided
estimate for the error of the Fourier method (\ref{m_F}) on the class $W^{\mu}_{s}$,
which is of independent interest; for the trigonometric case see Remark \ref{rem_trig} below,
and for the comparison with the Chebyshev system see the remark above.
Denote
$$
\varepsilon_N (W^{\mu}_{s}) := \sup_{\|f\|_{s,\mu}\le 1} \|(f-S_N f)\sqrt{w}\|_{C} .
$$

\begin{corollary} \label{Cor_En}
Let $\alpha\ge 0$, $1\leq s\le \infty$, $\mu>1-1/s$. Then, for any $N$, we have
$$
\varepsilon_N (W^{\mu}_{s}) \asymp N^{-\mu-1/s+1} .
$$
\end{corollary}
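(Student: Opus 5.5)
The upper bound is immediate from Lemma \ref{lemma_BoundT1}: for every $f$ with $\|f\|_{s,\mu}\le 1$ it gives $\|(f-S_Nf)\sqrt{w}\|_C\le c\,N^{-\mu-1/s+1}$. Taking the supremum over the unit ball yields $\varepsilon_N(W^\mu_s)\preceq N^{-\mu-1/s+1}$ for every $N$ and all $1\le s\le\infty$.

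For the lower bound I will reuse the test function from the proof of Theorem \ref{Th_low1}, now with its block placed just beyond the truncation level. Set
$$
g(t)=2^{-\mu}\,3^{-\mu}\,(N+1)^{-\mu-1/s}\sum_{k=N+1}^{2N+1}\ell^{(\alpha)}_k(t).
$$
This block contains $N+1$ indices. All of them lie in $[N+1,3(N+1)]$, and each satisfies $k\le 2N+1\le 2(N+1)$, so $k^\mu\le 2^\mu (N+1)^\mu$.

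First I check that $g$ lies in the unit ball. For $s<\infty$,
$$
\|g\|_{s,\mu}^s\le (N+1)\cdot 2^{\mu s}(N+1)^{\mu s}\cdot 6^{-\mu s}(N+1)^{-\mu s-1}\le 1 .
$$
For $s=\infty$, the same comparison gives $\|g\|_{\infty,\mu}\le 1$. The extra factor $3^{-\mu}$ is harmless slack.

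Next, since every Fourier--Laguerre coefficient of $g$ with index $k\le N$ vanishes, we have $S_Ng=0$, so $g-S_Ng=g$. Lemma \ref{Lemma BE}, applied with $N+1$ in place of $N$ and $\Omega=\{N+1,\dots,2N+1\}\subset[N+1,3(N+1)]$, gives
$$
\Bigl\|\Bigl(\sum_{k\in\Omega}\ell^{(\alpha)}_k\Bigr)\sqrt{w}\Bigr\|_C\ge \bar c\,(N+1).
$$
Therefore
$$
\varepsilon_N(W^\mu_s)\ge\|g\sqrt{w}\|_C\ge 6^{-\mu}\bar c\,(N+1)^{-\mu-1/s+1}\succeq N^{-\mu-1/s+1},
$$
where the last step uses $N+1\le 2N$.

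Nothing here is genuinely difficult, because Lemma \ref{Lemma BE} carries the real work. The one point needing care is that its hypothesis demands exactly $N'$ integer points in $[N',3N']$. I therefore take $N'=N+1$ rather than reusing the block $[N_1,2N_1-1]$ verbatim, since that block may contain indices $\le N$ that $S_N$ would not annihilate. I will also note that the hypothesis $\mu>1-1/s$ is needed only for the upper bound: it guarantees that $\varepsilon_N$ is finite and that the pointwise representation of $f\sqrt{w}$ used in Lemma \ref{lemma_BoundT1} holds.
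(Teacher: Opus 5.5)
Your proof is correct and is essentially the paper's: the paper takes the upper bound from Lemma~\ref{lemma_BoundT1} and, for the lower bound, uses $f_1$ from (\ref{f_1}) with $N_1=N+1$. That is exactly your block $\{N+1,\dots,2N+1\}$, combined with $S_Nf_1=0$ and Lemma~\ref{Lemma BE}. Your extra factor $3^{-\mu}$ is unnecessary, since the normalization $2^{-\mu}(N+1)^{-\mu-1/s}$ already gives $\|f_1\|_{s,\mu}\le 1$, but it affects only the constant.
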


\textit{Proof.}
The upper bound is contained in Lemma \ref{lemma_BoundT1}.
To prove the lower bound, we consider the function $f_1$ defined by (\ref{f_1})
with $N_1=N+1$; the rule (\ref{N_1}) for choosing $N_1$ is not used here.
As established in the proof of Theorem \ref{Th_low1}, $\|f_1\|_{s,\mu}\leq 1$ and
$$
\|f_1 \sqrt{w}\|_C \geq 2^{-\mu}\, \bar{c}\, N_1^{-\mu-1/s+1} .
$$
Moreover, $\langle f_1, \ell^{(\alpha)}_{k}\rangle = 0$ for all $0\le k\le N$,
whence $S_N f_1 = 0$. Therefore
$$
\varepsilon_N (W^{\mu}_{s}) \geq \|(f_1 - S_N f_1)\sqrt{w}\|_{C} = \|f_1 \sqrt{w}\|_{C}
\geq 2^{-\mu}\, \bar{c}\, (N+1)^{-\mu-1/s+1} .
$$
$\Box$
\vskip 2mm

\begin{remark} \label{rem_trig} \rm
In the trigonometric case, the result of Corollary \ref{Cor_En} is known
(for $s=1$, see \cite{NNS}; for $1<s<\infty$, see \cite{Chen}).
\end{remark}
\vskip 2mm

\begin{remark} \label{rem_sharp} \rm
The assumption $\mu>1-1/s$ is not a technical limitation of the technique employed,
but the exact threshold at which the quantities under study are finite.
Indeed, let $M>N$ be arbitrary and let $f_1$ be the function (\ref{f_1}) with $N_1=M$.
Then, exactly as in the proof of Corollary \ref{Cor_En},
$$
\varepsilon_N (W^{\mu}_{s}) \geq \|f_1 \sqrt{w}\|_{C} \geq 2^{-\mu}\, \bar{c}\, M^{-\mu-1/s+1} .
$$
Letting $M\to\infty$, we conclude that for $\mu<1-1/s$ the error
$\varepsilon_N (W^{\mu}_{s})$ is infinite for every $N$, while for $\mu=1-1/s$ we
have $\varepsilon_N (W^{\mu}_{s}) \geq 2^{-\mu}\, \bar{c} > 0$ for every $N$, so
that the Fourier method does not converge on $W^{\mu}_{s}$.
The same is true for the minimax quantities: repeating the proof of
Theorem \ref{Th7.3} below with the set $\Lambda_M$, $M\geq N$, in place of
$\Lambda_N$, we obtain
$$
R_{N,\delta}(W^{\mu}_{s},\Psi_N, C, \ell_p) \geq c\, M^{-\mu-1/s+1}
\quad \mbox{for any } M\geq N .
$$
Hence for $\mu<1-1/s$ the quantity $R_{N,\delta}$ is infinite for all $N$ and
$\delta$, and for $\mu=1-1/s$ it is bounded away from zero uniformly in $N$ and
$\delta$. Thus, the condition $\mu>1-1/s$ describes exactly the range of
parameters in which the considered recovery problem has a finite answer
vanishing as $\delta\to 0$.
\end{remark}
\vskip 2mm

The following statement contains order-optimal estimates for the quantities ${\cal E}_{\delta}$,
$R_{N,\delta}$ in the weighted $\sup$-norm.

\begin{theorem} \label{Th_opt1}
Let $1\leq s\le \infty$, $\mu>1-1/s$, $1\leq p \leq \infty$.
Then, for any $0<\delta<1$, we have
$$
{\cal E}_{\delta}(W^{\mu}_{s}, {\Psi}, C,\ell_p)
\asymp \delta^{\frac{\mu+1/s-1}{\mu-1/p+1/s}} .
$$
Moreover, for any $0<\delta<1$ and $N \asymp \delta^{-\frac{1}{\mu-1/p+1/s}}$, we have
$$
R_{N,\delta}(W^{\mu}_{s},\Psi_N, C, \ell_p)
\asymp \delta^{\frac{\mu+1/s-1}{\mu-1/p+1/s}}
\asymp N^{-\mu-1/s+1} .
$$
The order-optimal bounds of ${\cal E}_{\delta}$, $R_{N,\delta}$
are attained by any method (\ref{ModVer}) from $\mathcal{S}^\theta$, $\theta>\mu+1/s-1$.
\end{theorem}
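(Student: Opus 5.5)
The plan is to combine the upper bound of Theorem~\ref{Th_up} with the lower bound of Theorem~\ref{Th_low1}, using the chain of inequalities (\ref{low1}).

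\emph{Upper bound.} Fix any method $S^\nu_N$ from $\mathcal{S}^\theta$ with $\theta>\mu+1/s-1$, and take $N\asymp\delta^{-\frac{1}{\mu-1/p+1/s}}$. The operator $f^\delta\mapsto S^\nu_N f^\delta$ uses only the coefficients with indices in $\Omega=\{0,1,\ldots,N\}$. Hence it has the form $\psi G_\Omega$, where $\psi$ maps a sequence to the weighted sum $\sum_{k\le N}\nu_k^N x_k\ell_k^{(\alpha)}$. Since $\card(\Omega)=N+1$, this algorithm lies in $\Psi_{N+1}\subset\Psi$. The shift $N\mapsto N+1$ is harmless: it does not change the order of $N$, and we may equally start from $N-1$.

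Theorem~\ref{Th_up} bounds the error of this method uniformly over $f\in W^\mu_s$ and over all perturbations with $\|\boldsymbol{\xi}\|_{\ell_p}\le\delta$. The constant $c$ there depends only on $\alpha,\mu,s,\theta,C(\nu)$, not on $f$ or $\boldsymbol{\xi}$. Taking suprema therefore gives
$$
e_\delta(W^\mu_s,\psi G_\Omega,C,\ell_p)\le c\,\delta^{\frac{\mu+1/s-1}{\mu-1/p+1/s}}\asymp N^{-\mu-1/s+1}.
$$
Passing to infima yields the same upper bound for $R_{N,\delta}$ and, by (\ref{low1}), for ${\cal E}_\delta$.

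\emph{Lower bound.} Theorem~\ref{Th_low1} gives $R_{N,\delta}\ge{\cal E}_\delta\ge c\,\delta^{\frac{\mu+1/s-1}{\mu-1/p+1/s}}$ for every $N$ and every $0<\delta<1$. Combined with the upper bound, this gives ${\cal E}_\delta\asymp\delta^{\frac{\mu+1/s-1}{\mu-1/p+1/s}}$ and, for $N\asymp\delta^{-\frac{1}{\mu-1/p+1/s}}$, also $R_{N,\delta}\asymp\delta^{\frac{\mu+1/s-1}{\mu-1/p+1/s}}$.

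The final relation $\delta^{\frac{\mu+1/s-1}{\mu-1/p+1/s}}\asymp N^{-\mu-1/s+1}$ is a direct substitution. Since $\mu-1/p+1/s>1-1/s-1/p+1/s\ge0$, the exponent is positive, and the substitution is legitimate. The same computation shows that any method from $\mathcal{S}^\theta$ attains the optimal order.

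\emph{Main obstacle.} The only delicate point is bookkeeping: matching $N+1$ coefficients with the class $\Psi_N$, and ensuring uniformity of the constants over $f$ and $\boldsymbol{\xi}$. Both are routine once the constant conventions stated after (\ref{qual}) are used.
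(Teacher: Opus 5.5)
Your proposal is correct and follows the paper's proof: the upper bounds come from Theorem~\ref{Th_up}, applied to $S^\nu_N$ viewed as a Galerkin algorithm on the index set $\{0,\ldots,N\}$, and the lower bounds come from Theorem~\ref{Th_low1} combined with (\ref{low1}). Your explicit bookkeeping of $N+1$ versus $N$ coefficients is a refinement the paper leaves implicit; in the degenerate case $N=1$, where passing to level $N-1=0$ is not covered by (\ref{qual}), one has $\delta\asymp 1$, so the trivial method $\psi\equiv 0$, whose error is $O(1)$, already attains the required order.
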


{\bf Proof.}
The upper bounds follow from Theorem \ref{Th_up} and the lower bounds are found in Theorem \ref{Th_low1}.\\
$\Box$
\vskip 2mm

\begin{remark} \label{2change1}   \rm
As follows from Theorem \ref{Th_opt1}, narrowing the set of methods from $\Psi$
to $\Psi_N$ does not affect the optimal order of accuracy $O(\delta^{\frac{\mu+1/s-1}{\mu-1/p+1/s}})$.
In other words, no summation method from $\Psi$ dealing with an arbitrary (possibly infinite)
amount of Galerkin information provides a higher order of accuracy than method (\ref{ModVer}).
\end{remark}
\vskip 2mm

\begin{remark} \label{norma_C}   \rm
In the theory of ill-posed problems (see, for example, \cite[Section 2.8]{LuP}), it is well known that
a problem is well-posed if and only if its
solution can be potentially approximated with an accuracy of order $O(\delta)$.
Therefore, as follows from Theorem \ref{Th_opt1},
the summation problem in the $C$-norm is well-posed for any $1\le s\le \infty$ and $p=1$.
The problem is also well-posed in any weaker metric than $C$.
At the same time, the problem is ill-posed in $C$-norm for any $1\le s\le \infty$ if $p>1$.
\end{remark}

\vskip 2mm

Further, we establish the smallest value of $N$ at which one can achieve
the sharp (in the power scale) estimates for the quantity $R_{N,\delta}$.

\begin{theorem} \label{Th7.3}
Let $1\leq s\le \infty$, $1\leq p \leq \infty$, $\mu>1-1/s$.
Then, for any $0<\delta<1$ and $N$, we have
\begin{equation}  \label{low_est_2}
R_{N,\delta}(W^{\mu}_{s},\Psi_N, C, \ell_p)
\ge c\, \max \left\{
\delta^{\frac{\mu+1/s-1}{\mu-1/p+1/s}},\,\,
N^{-\mu-1/s+1}\right\} .
\end{equation}
\end{theorem}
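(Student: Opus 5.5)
The first term in the maximum is already given by Theorem \ref{Th_low1}. It therefore remains to show that, for every $N$ and every $\delta$,
$$
R_{N,\delta}(W^{\mu}_{s},\Psi_N, C, \ell_p)\ge c\,N^{-\mu-1/s+1}.
$$
I will use the standard "fooling function" argument for information complexity. Any algorithm in $\Psi_N$ sees only the coefficients indexed by some set $\Omega$ with $\card(\Omega)\le N$. So I will build a function in $W^\mu_s$ whose coefficients on $\Omega$ all vanish but whose weighted sup-norm is large.

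\textbf{Step 1: the unobserved index set.} Fix an arbitrary $\Omega\subset\mathbb{N}_0$ with $\card(\Omega)\le N$. The interval $[N,3N]$ contains $2N+1$ integers, so at least $N+1\ge N$ of them lie outside $\Omega$. Let $\Lambda_N$ be a set of exactly $N$ integer points of $[N,3N]\setminus\Omega$.

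\textbf{Step 2: the test function.} Define
$$
f_2=3^{-\mu}N^{-\mu-1/s}\sum_{k\in\Lambda_N}\ell^{(\alpha)}_k .
$$
Membership in the class follows from $k\le 3N$:
\begin{itemize}
\item for $s<\infty$, $\|f_2\|_{s,\mu}^s\le N\,(3N)^{\mu s}\,3^{-\mu s}N^{-\mu s-1}=1$;
\item for $s=\infty$, $\|f_2\|_{\infty,\mu}\le (3N)^\mu 3^{-\mu}N^{-\mu}=1$.
\end{itemize}
So $\pm f_2\in W^\mu_s$. Lemma \ref{Lemma BE} applies directly, because $\Lambda_N$ consists of $N$ integer points in $[N,3N]$. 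It gives
$$
\|f_2\sqrt w\|_C\ge 3^{-\mu}\bar c\,N^{-\mu-1/s+1}.
$$

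\textbf{Step 3: indistinguishability.} Since $\Lambda_N\cap\Omega=\emptyset$, we have $G_\Omega f_2=G_\Omega(-f_2)=\mathbf 0$. Take the unperturbed data, $f^\delta=\pm f_2$ with $\xi=0$; this is admissible for every $\delta$. Then any $\psi G_\Omega$ outputs the same element $g=\psi(\mathbf 0)$ for both $f_2$ and $-f_2$. By the triangle inequality,
$$
2\|f_2\sqrt w\|_C\le \|(f_2-g)\sqrt w\|_C+\|(-f_2-g)\sqrt w\|_C\le 2\,e_\delta(W^\mu_s,\psi G_\Omega,C,\ell_p).
$$
Taking the infimum over all $\Omega$ with $\card(\Omega)\le N$ and all $\psi$ yields the bound $c\,N^{-\mu-1/s+1}$.

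\textbf{Step 4: combining.} The bound from Step 3 together with Theorem \ref{Th_low1} gives \eqref{low_est_2}.

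\textbf{Where the difficulty lies.} The only nontrivial ingredient is the block lower bound for an \emph{arbitrary} index set in $[N,3N]$, not just a contiguous one. This is exactly why Lemma \ref{Lemma BE} is stated in that generality, and here it is assumed. The remaining care is bookkeeping: $\Lambda_N$ must avoid $\Omega$ while staying inside $[N,3N]$, which the counting in Step 1 guarantees.
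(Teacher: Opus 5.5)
Your proposal is correct and essentially identical to the paper's proof. You use the same test function $f_2=3^{-\mu}N^{-\mu-1/s}\sum_{k\in\Lambda_N}\ell^{(\alpha)}_k$ with $\Lambda_N\subset[N,3N]\setminus\Omega$, Lemma~\ref{Lemma BE} for the lower bound on $\|f_2\sqrt{w}\|_C$, the indistinguishability of $\pm f_2$ under $G_\Omega$ with zero perturbation, and Theorem~\ref{Th_low1} for the $\delta$-term. Your count of $2N+1$ integers in $[N,3N]$ also supplies the justification for the existence of $\Lambda_N$, which the paper only asserts.
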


{\bf Proof.}
Let $\delta$ and $N$ be arbitrary.
Let us fix an arbitrarily chosen set $\hat{\Omega}$, $\card(\hat{\Omega})\le N$, of points
$k$, $0\le k < \infty$, of the coordinate axis and take a set
$$
\Lambda_N \subset \{N, N+1,\ldots,3N\} ,
$$
consisting of $N$ points of the axis such that
$$
\hat{\Omega} \cap \Lambda_N = \varnothing .
$$
Note that such a set $\Lambda_N$ will always exist.
Further, we construct the following auxiliary function
$$
f_2 (t) = 3^{-\mu}\, N^{-\mu-1/s}\, \sum_{k\in \Lambda_N} \ell^{(\alpha)}_k (t) .
$$
It is easy to see that $\|f_2\|_{s,\mu}\leq 1$.
By means of Lemma \ref{Lemma BE} we obtain
\begin{equation}   \label{f_2_BE}
\|f_2 \sqrt{w}\|_{C}
\geq c\,  N^{-\mu-1/s+1} .
\end{equation}
For any $\delta>0$, the function $f_2$ itself is an admissible $\delta$-perturbation,
i.e. $f_2^\delta (t) := f_2 (t)$.
Taking into account the relationship
$G_{\hat{\Omega}}(f_2)=G_{\hat{\Omega}}(-f_2)=\mathbf{0}$,
for any $\psi G_{\hat{\Omega}}\in\Psi_{\hat{\Omega}}$ we have
$\psi G_{\hat{\Omega}}(f_2)=\psi G_{\hat{\Omega}}(-f_2)$ and
$$
2 \|f_2 \sqrt{w}\|_{C}
=  \|\Big(-f_2  - \psi G_{\hat{\Omega}}(-f_2) - f_2 + \psi G_{\hat{\Omega}}(f_2)\Big) \sqrt{w}\|_{C}
$$
$$
\le \|\Big(- f_2 - \psi G_{\hat{\Omega}}(-f_2^\delta)\Big) \sqrt{w}\|_{C}
+ \|\Big(f_2 - \psi G_{\hat{\Omega}}(f_2^\delta)\Big) \sqrt{w}\|_{C}
$$
$$
\le 2\, \sup_{\substack{\|f\|_{s,\mu}\leq 1}}
\ \sup_{\genfrac{}{}{0pt}{}{\substack{f^\delta\in L_{2,w}: \, \mathbf{f}^{\delta}=\mathbf{f}+\mathbf{\xi}}}
{\|\mathbf{\xi}\|_{\ell_p} \leq \delta} }
\|\Big( f - \psi G_{\hat{\Omega}}(f^{\delta})\Big) \sqrt{w} \|_{C}
$$
$$
:= 2\, e_\delta (W_{s}^\mu, \psi G_{\hat{\Omega}}, C, \ell_p) .
$$
Then, using (\ref{f_2_BE}), we establish that for any $\delta>0$ and $N$, the following holds:
$$
e_\delta (W_{s}^\mu, \psi G_{\hat{\Omega}}, C, \ell_p)
\ge \|f_2 \sqrt{w}\|_{C}
\ge c\, N^{-\mu-1/s+1} .
$$
Since $\psi G_{\hat{\Omega}}$ and $\hat{\Omega}$, $\card(\hat{\Omega})\le N$, are arbitrary, it follows that
\begin{equation}  \label{low_est_N_q}
R_{N,\delta} (W^{\mu}_{s},\Psi_N, C, \ell_p)
\ge  c\, N^{-\mu-1/s+1} .
\end{equation}
Combining (\ref{low_est_1}) and (\ref{low_est_N_q}) proves the theorem.\\
$\Box$
\vskip 2mm

\begin{corollary} \label{Cor3}
From (\ref{low_est_2}) it follows that for any $N\ll \delta^{-1/(\mu-1/p+1/s)}$ the relation
$$
R_{N,\delta}(W^{\mu}_{s},\Psi_N, C, \ell_p)
\gg \delta^{\frac{\mu+1/s-1}{\mu-1/p+1/s}}
$$
is satisfied. This means that for such $N$ the optimal order of accuracy cannot be achieved.
On the other hand, for all $N\ge c\, \delta^{-1/(\mu-1/p+1/s)}$ the optimal order can be achieved.
\end{corollary}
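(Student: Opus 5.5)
The corollary has two parts, and I will derive both from the two-sided information already in hand: the lower bound (\ref{low_est_2}) of Theorem \ref{Th7.3}, and the upper bound of Theorem \ref{Th_up}. Throughout, write $\gamma=\mu-1/p+1/s$ and $\beta=\mu+1/s-1$. We have $\beta>0$ by hypothesis, and $\gamma\ge\beta>0$ because $1-1/p\ge0$.

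For the first part, take $N\ll\delta^{-1/\gamma}$, i.e. $N\,\delta^{1/\gamma}\to0$. By (\ref{low_est_2}), $R_{N,\delta}\ge c\,N^{-\beta}$. Then
$$
\frac{N^{-\beta}}{\delta^{\beta/\gamma}}=\bigl(N\,\delta^{1/\gamma}\bigr)^{-\beta}\to\infty,
$$
since $\beta>0$. Hence $R_{N,\delta}\gg\delta^{\beta/\gamma}$, and the optimal order is not attained for such $N$. Only the $N$-term of the maximum in (\ref{low_est_2}) is needed here, and the argument is a one-line exponent comparison.

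For the second part, let $N\ge c_0\,\delta^{-1/\gamma}$. The approach is to exhibit an algorithm in $\Psi_N$ with error $O(\delta^{\beta/\gamma})$. Set $N_0=\lceil c_0\,\delta^{-1/\gamma}\rceil\le N$, so that $N_0\asymp\delta^{-1/\gamma}$. Take any method $S^\nu_{N_0}$ from $\mathcal{S}^\theta$ with $\theta>\beta$, for instance the Fourier method. It uses $N_0+1$ coefficients, indexed by $\Omega=\{0,\dots,N_0\}$.

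There is a small technical point in fitting this into $\Psi_N$, since $N_0+1$ may exceed $N$ by one. I will handle it in one of two equivalent ways. The first is to run the method with truncation level $N_0-1$, which is still $\asymp\delta^{-1/\gamma}$. The second is to note that $\Psi_{N}\supset\Psi_{N_0+1}$ as soon as $N\ge N_0+1$, and absorb the off-by-one into the constant $c_0$.

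With this choice, Theorem \ref{Th_up} gives
$$
e_\delta\bigl(W^\mu_s,S^\nu_{N_0},C,\ell_p\bigr)\le c\,\delta^{\beta/\gamma}.
$$
Since $R_{N,\delta}$ is nonincreasing in $N$ (because $\Psi_{N_0}\subset\Psi_N$), it follows that $R_{N,\delta}\le c\,\delta^{\beta/\gamma}$. Combined with the matching lower bound of Theorem \ref{Th_low1}, this shows the optimal order is achieved for all such $N$.

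The only step needing care is this monotonicity and counting convention. The rest is direct substitution, so no real obstacle is expected.
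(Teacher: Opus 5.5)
Your proposal is correct and follows the paper's reasoning, which it leaves implicit: the first part compares the $N^{-\mu-1/s+1}$ term of (\ref{low_est_2}) with $\delta^{\frac{\mu+1/s-1}{\mu-1/p+1/s}}$, and the second part uses the upper bound of Theorems \ref{Th_up} and \ref{Th_opt1} at $N_0\asymp\delta^{-1/(\mu-1/p+1/s)}$. You also make explicit the monotonicity $\Psi_{N_0}\subset\Psi_N$, which the paper omits but which is needed for $p>1$, since Theorem \ref{Th_up} applied directly at large $N$ leaves the growing term $\delta N^{1-1/p}$; of your two off-by-one fixes, prefer the second, because the first breaks down when $N_0=1$.
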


Thus, we obtain
\begin{theorem} \label{ThNEW_q}
Let $1\leq s\le \infty$, $1\leq p \leq \infty$, $\mu>1-1/s$.
The most economical (in order) choice of $N$ gives
\begin{equation}  \label{optNp}
N_{\min}\asymp \delta^{-\frac{1}{\mu-1/p+1/s}} ,
\end{equation}
for which we have
$$
\min\limits_N R_{N,\delta}(W^{\mu}_{s},\Psi_N, C, \ell_p)
\asymp R_{N_{\min},\delta}(W^{\mu}_{s},\Psi_{N_{\min}}, C, \ell_p)
\asymp \delta^{\frac{\mu+1/s-1}{\mu-1/p+1/s}} .
$$
\end{theorem}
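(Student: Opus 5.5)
The plan is to combine the two-sided estimates already in hand: the upper bound from Theorem \ref{Th_up} (via Theorem \ref{Th_opt1}) and the lower bound (\ref{low_est_2}) of Theorem \ref{Th7.3}. Throughout, put
$$
N_* := \delta^{-\frac{1}{\mu-1/p+1/s}}, \qquad \rho_\delta := \delta^{\frac{\mu+1/s-1}{\mu-1/p+1/s}} .
$$
The exponent identity $N_*^{-\mu-1/s+1}=\rho_\delta$ is immediate. Since $\mu>1-1/s$ and $1/p\le 1$, we have $\mu-1/p+1/s>0$, so $N_*\to\infty$ as $\delta\to 0$. This makes the rule $N\asymp N_*$ meaningful; taking $N=\lceil N_*\rceil\ge 1$ suffices.

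First I establish the lower bound for $\min_N R_{N,\delta}$. By (\ref{low_est_2}), for every $N$,
$$
R_{N,\delta}(W^{\mu}_{s},\Psi_N,C,\ell_p)\ge c\,\rho_\delta ,
$$
and the constant $c$ does not depend on $N$. Hence $\min_N R_{N,\delta}\succeq \rho_\delta$. This is really Theorem \ref{Th_low1}, since the $\delta$-part of the maximum is uniform in $N$.

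Next comes the upper bound. For $N_{\min}=\lceil N_*\rceil$, Theorem \ref{Th_opt1} (equivalently Theorem \ref{Th_up}, applied for instance to the Fourier method, which lies in $\mathcal{S}^\theta$ for every $\theta$) gives
$$
R_{N_{\min},\delta}\preceq \rho_\delta .
$$
This also bounds the minimum over $N$ from above, so
$$
\min_N R_{N,\delta}\asymp R_{N_{\min},\delta}\asymp \rho_\delta .
$$

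Finally, I justify that $N_{\min}$ is minimal in order. Suppose $N\ll N_*$. Then the second term in (\ref{low_est_2}) gives
$$
R_{N,\delta}\ge c\,N^{-\mu-1/s+1}\gg N_*^{-\mu-1/s+1}=\rho_\delta ,
$$
because the exponent $-\mu-1/s+1$ is negative. Such $N$ therefore cannot attain the optimal order, as recorded in Corollary \ref{Cor3}. Consequently any $N$ achieving $R_{N,\delta}\asymp\rho_\delta$ must satisfy $N\succeq N_*$, and $N_*$ itself suffices.

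The only delicate point is the interpretation of the statement. "Most economical in order" should be read as two claims: every $N$ attaining the optimal order satisfies $N\succeq N_*$, and $N\asymp N_*$ does attain it. The argument must also track that all constants are independent of $\delta$ and $N$. Both follow directly from the stated results, so I expect no real obstacle.
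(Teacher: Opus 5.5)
Your proposal is correct and matches the paper's proof, which simply combines Theorem \ref{Th_opt1} (upper bound from Theorem \ref{Th_up}, lower bound from Theorem \ref{Th_low1}) with the $N$-dependent lower bound of Theorem \ref{Th7.3}; your minimality argument for $N\ll N_*$ is the one recorded in Corollary \ref{Cor3}. One minor point: $S^{\nu}_N$ uses the $N+1$ coefficients with indices $0,\ldots,N$, so strictly it lies in $\Psi_{N+1}$ rather than $\Psi_N$, but this does not affect the order.
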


{\bf Proof.}
The theorem follows from Theorems \ref{Th_opt1} and \ref{Th7.3}.\\
$\Box$
\vskip 2mm

\begin{remark}  \label{rmNEW}   \rm
In the framework of  IBC theory, the quantity $N_{\min}$ is usually called information complexity.
The value (\ref{optNp}) describes the smallest (in order) amount of discrete information of the form (\ref{FHc})
necessary to achieve the highest possible accuracy $O(\delta^{\frac{\mu+1/s-1}{\mu-1/p+1/s}})$.
Note also that the rule (\ref{optNp}) coincides, in order, with the
discretization rule $N \asymp \delta^{-\frac{1}{\mu-1/p+1/s}}$ of Theorems
\ref{Th_up} and \ref{Th_opt1}: the summation methods (\ref{ModVer}), which
attain the optimal accuracy, use precisely (in order) the
information-complexity amount of the Fourier--Laguerre coefficients.
\end{remark}

\vskip 3mm
Now we consider separately the case where the error of the input data is measured in $L_{2,w}$.
In this case, it is possible to establish sharp (in order) estimates for
$E_\delta$ as well.

\begin{theorem} \label{Th7.4}
Let $1\leq s\le \infty$, $\mu>1-1/s$. Then, for any $0<\delta<1$,
we have
$$
E_{\delta}(W^{\mu}_{s}, {\cal M}, C)
\geq c \, \delta^{\frac{\mu+1/s-1}{\mu+1/s-1/2}} .
$$
\end{theorem}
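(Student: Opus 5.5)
The argument reuses the two-point scheme from the proof of Theorem~\ref{Th_low1}, but measures the perturbation in $L_{2,w}$, where $\|g\|_2=\|\mathbf{g}\|_{\ell_2}$ by Parseval. This is the case $p=2$, and indeed $\frac{\mu+1/s-1}{\mu+1/s-1/2}$ is the exponent of Theorem~\ref{Th_low1} with $1/p=1/2$. Since $E_\delta$ is an infimum over all mappings $m\in\mathcal{M}$, with no restriction to Galerkin information, the only tool is the indistinguishability of $f_1$ and $-f_1$ from the single input $f^\delta=0$.

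First, given $0<\delta<1$, I choose $\tau>0$ by $2^{-\mu}\tau^{-\mu+1/2-1/s}=\delta$, as in (\ref{N_1}) with $p=2$, and set $N_1=\lceil\tau\rceil$. The exponent $-\mu+1/2-1/s$ is negative because $\mu>1-1/s>1/2-1/s$, so $\tau\to\infty$ as $\delta\to 0$ and the choice is legitimate. I then take $f_1$ exactly as in (\ref{f_1}). As already checked there, $\|f_1\|_{s,\mu}\le 1$ for all $1\le s\le\infty$, so $\pm f_1\in W^\mu_s$.

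Second, Parseval gives
$$\|f_1\|_2=\|\mathbf{f_1}\|_{\ell_2}=2^{-\mu}N_1^{-\mu+1/2-1/s}\le\delta ,$$
which is (\ref{f_1=delta}) with $p=2$. Hence $f^\delta=0\in L_{2,w}$ satisfies both $\|f_1-0\|_2\le\delta$ and $\|-f_1-0\|_2\le\delta$.

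Third, for an arbitrary $m\in\mathcal{M}$, the triangle inequality gives
$$2\|f_1\sqrt w\|_C\le \|(f_1-m0)\sqrt w\|_C+\|(-f_1-m0)\sqrt w\|_C .$$
Taking suprema, the worst-case error of $m$ on $W^\mu_s$ is at least $\|f_1\sqrt w\|_C$, and since $m$ is arbitrary, $E_\delta(W^\mu_s,\mathcal M,C)\ge\|f_1\sqrt w\|_C$.

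Finally, Lemma~\ref{Lemma BE}, applied with $\Omega=\{N_1,\dots,2N_1-1\}\subset[N_1,3N_1]$, gives $\|f_1\sqrt w\|_C\ge 2^{-\mu}\bar c\,N_1^{-\mu-1/s+1}$. Because $N_1\le\tau+1\le 2\tau$, we have $N_1\asymp\tau\asymp\delta^{-1/(\mu+1/s-1/2)}$, so this lower bound is $\succeq\delta^{\frac{\mu+1/s-1}{\mu+1/s-1/2}}$.

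There is no serious obstacle, since the block lower bound is supplied by Lemma~\ref{Lemma BE}. The only points needing care are that $\tau\ge 1$ holds automatically for $\delta<1$, which keeps $N_1\asymp\tau$, and that every mapping $m$ is treated as deterministic on the common input $0$, which is exactly what the two-point argument uses.
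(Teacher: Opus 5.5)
Your proposal is correct and follows essentially the same route as the paper: the $p=2$ instance of the block function $f_1$ from (\ref{f_1}), Parseval to make $f^\delta=0$ a common $\delta$-perturbation of $\pm f_1$, the two-point triangle inequality valid for an arbitrary $m\in\mathcal{M}$, and Lemma~\ref{Lemma BE} for the block. One small slip: $\tau\ge 1$ holds only for $\delta\le 2^{-\mu}$, not for all $\delta<1$, but for $\delta\in(2^{-\mu},1)$ you get $N_1=1$ and the bound $2^{-\mu}\bar c\ge 2^{-\mu}\bar c\,\delta^{\frac{\mu+1/s-1}{\mu+1/s-1/2}}$ is immediate, so the conclusion is unaffected.
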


{\bf Proof.}
We follow the scheme of the proof of Theorem \ref{Th_low1} with $p=2$; the only
difference is that the perturbation is now measured in the norm of $L_{2,w}$
rather than in $\ell_2$, and by Parseval's identity these two settings coincide.

For an arbitrary $0<\delta<1$, we choose a real number $\tau>0$ so that
\begin{equation}  \label{N_1_L2}
2^{-\mu}\, \tau^{-\mu+1/2-1/s} = \delta ,
\end{equation}
set $N_1=\lceil\tau\rceil$ and let $f_1$ be the function defined by (\ref{f_1})
with this value of $N_1$. As in the proof of Theorem \ref{Th_low1},
$\|f_1\|_{s,\mu}\le 1$. Since the system $\{\ell^{(\alpha)}_k\}$ is orthonormal
in $L_{2,w}$, Parseval's identity gives
$$
\|f_1\|_{2} = \|\mathbf{f_1}\|_{\ell_2}
= 2^{-\mu}\, N_1^{-\mu+1/2-1/s} \le \delta ,
$$
where the last inequality holds by the choice of $N_1$, because
$-\mu+1/2-1/s<0$ for $\mu>1-1/s$. Hence the zero function
$f^\delta=0\in L_{2,w}$ is a $\delta$-perturbation of $f_1$ and $-f_1$
simultaneously.

Now let $m\in\mathcal{M}$ be arbitrary. Since $f_1,\,-f_1\in W^{\mu}_{s}$, the
triangle inequality yields
$$
2\,\|f_1 \sqrt{w}\|_{C}
\le \|(f_1 - m(0))\sqrt{w}\|_{C} + \|(-f_1 - m(0))\sqrt{w}\|_{C}
$$
$$
\le 2 \sup_{f \in W_{s}^\mu}\
\sup_{\substack{f^\delta \in L_{2,w} \\ \|f - f^\delta\|_{2} \leq \delta}}
\|(f - m f^\delta)\sqrt{w}\|_{C} .
$$
We emphasize that this argument uses no structural properties of $m$: it is
valid for an arbitrary mapping from $L_{2,w}$ into $C$.
Taking the infimum over $m\in\mathcal{M}$ and applying Lemma \ref{Lemma BE} as
in (\ref{f_1_BE}), we obtain
$$
E_{\delta}(W^{\mu}_{s}, \mathcal{M}, C)
\ge \|f_1\sqrt{w}\|_{C}
\ge 2^{-\mu}\,\bar{c}\, N_1^{-\mu-1/s+1}
\geq c\, \delta^{\frac{\mu+1/s-1}{\mu+1/s-1/2}} ,
$$
because (\ref{N_1_L2}) gives $N_1 \asymp \delta^{-\frac{1}{\mu+1/s-1/2}}$.
In the case $s=\infty$ the argument goes through with the usual modifications
($1/s=0$ throughout).\\
$\Box$

\vskip 2mm

\begin{theorem} \label{Th7.5}
Let $1\leq s\le \infty$, $\mu>1-1/s$.
Then, for any $0<\delta<1$, we have
$$
E_{\delta}(W^{\mu}_{s}, {\mathcal M}, C)
\asymp {\cal E}_{\delta}(W^{\mu}_{s},\Psi, C, \ell_2)
\asymp \delta^{\frac{\mu+1/s-1}{\mu+1/s-1/2}} .
$$
Moreover, for any $0<\delta<1$ and $N \asymp \delta^{-\frac{1}{\mu+1/s-1/2}}$, we have
$$
R_{N,\delta}(W^{\mu}_{s},\Psi_N, C, \ell_2)
\asymp \delta^{\frac{\mu+1/s-1}{\mu+1/s-1/2}}
\asymp N^{-\mu-1/s+1} .
$$
The order-optimal bounds of $E_{\delta}$, ${\cal E}_{\delta}$, $R_{N,\delta}$
are attained by any method (\ref{ModVer}) from $\mathcal{S}^\theta$,
$\theta>\mu+1/s-1$.
\end{theorem}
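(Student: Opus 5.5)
The plan is to sandwich all three quantities between a common upper bound and a common lower bound. The lower end is Theorem \ref{Th7.4}; the upper end comes from Theorem \ref{Th_up} specialized to $p=2$. The chain of inequalities (\ref{low2}) links the quantities in between.

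For the lower bound, Theorem \ref{Th7.4} gives $E_{\delta}(W^{\mu}_{s},\mathcal{M},C)\ge c\,\delta^{\frac{\mu+1/s-1}{\mu+1/s-1/2}}$. By (\ref{low2}), for every $N$,
$$
E_{\delta}(W^{\mu}_{s},\mathcal{M},C)\le {\cal E}_{\delta}(W^{\mu}_{s},\Psi,C,\ell_2)\le R_{N,\delta}(W^{\mu}_{s},\Psi_N,C,\ell_2),
$$
so the same lower bound transfers to ${\cal E}_\delta$ and to $R_{N,\delta}$.

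For the upper bound, fix any method $S_N^\nu$ from $\mathcal{S}^\theta$ with $\theta>\mu+1/s-1$. Take $N\asymp\delta^{-\frac{1}{\mu-1/2+1/s}}$, which is the rule of Theorem \ref{Th_up} with $p=2$.
\begin{itemize}
\item The recovery operator $\mathbf{f}^\delta\mapsto\sum_{k\le N}\nu_k^N f^\delta_k\,\ell_k^{(\alpha)}$ uses only the index set $\Omega=\{0,\dots,N\}$. Hence $S_N^\nu\in\Psi_{N+1}\subset\Psi$.
\item Theorem \ref{Th_up} with $p=2$ gives $e_\delta(W^{\mu}_{s},S_N^\nu,C,\ell_2)\le c\,\delta^{\frac{\mu+1/s-1}{\mu+1/s-1/2}}\asymp N^{-\mu-1/s+1}$.
\item This bounds $R_{N+1,\delta}$ and ${\cal E}_\delta$ from above. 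Since $N+1\asymp N$, the exponent is unchanged, so it bounds $R_{N,\delta}$ for $N$ in the stated order range; alternatively, run the method with $N-1$ in place of $N$.
\end{itemize}

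It remains to bound $E_\delta$ from above. For this I would use the same method viewed as an element of $\mathcal{M}$, namely $m f^\delta := S_N^\nu f^\delta$. If $\|f-f^\delta\|_2\le\delta$ in $L_{2,w}$, then Parseval's identity gives $\|\mathbf{f}-\mathbf{f}^\delta\|_{\ell_2}=\|f-f^\delta\|_2\le\delta$. So condition (\ref{perturbation2}) holds with $p=2$, and Theorem \ref{Th_up} applies verbatim. Therefore $E_\delta\le c\,\delta^{\frac{\mu+1/s-1}{\mu+1/s-1/2}}$.

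Combining both directions gives all the claimed equivalences, and shows they are attained by every method of $\mathcal{S}^\theta$ with $\theta>\mu+1/s-1$. There is no genuine obstacle, because the hard ingredient, the block lower bound of Lemma \ref{Lemma BE}, has already been absorbed into Theorem \ref{Th7.4}. The only point needing care is this Parseval identification of $L_{2,w}$-perturbations with $\ell_2$-perturbations of the coefficient sequence: it is what lets Theorem \ref{Th_up} control $E_\delta$, which is defined via arbitrary mappings on $L_{2,w}$ rather than via Galerkin information.
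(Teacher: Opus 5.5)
Your proposal is correct and follows essentially the same route as the paper: the upper bound comes from Theorem \ref{Th_up} with $p=2$, the lower bound from Theorem \ref{Th7.4}, and the chain (\ref{low2}) transfers both between $E_\delta$, ${\cal E}_\delta$ and $R_{N,\delta}$. Your extra bookkeeping fills in details the paper leaves implicit: it notes that $S_N^\nu$ uses $N+1$ coefficients and runs $S_{N-1}^\nu$ instead, and it spells out the Parseval identification that underlies the first inequality in (\ref{low2}).
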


{\bf Proof.}
The upper bounds
follow from Theorem \ref{Th_up}, the lower bounds are found in Theorem \ref{Th7.4}, and
the passage between $E_{\delta}$, ${\cal E}_{\delta}$ and $R_{N,\delta}$ is given by
(\ref{low2}).
$\Box$

\vskip 2mm

For the comparison with the Chebyshev system (Theorem~6.2 of \cite{Sem_Sol_2026})
see the remark following the proof of Theorem \ref{Th_low1}.

\vskip 2mm

\begin{remark} \label{2change2_alpha}   \rm
As follows from Theorem \ref{Th7.5}, narrowing the set of methods from ${\mathcal M}$
to $\Psi_N$ and $\Psi$ does not affect the optimal order of accuracy
$O(\delta^{\frac{\mu+1/s-1}{\mu+1/s-1/2}})$.
In other words, no mapping from $L_{2,w}$ into $C$ provides a higher order of accuracy
than methods (\ref{ModVer}) from $\mathcal{S}^\theta$,
$\theta>\mu+1/s-1$.
\end{remark}
\vskip 2mm

\section{Computational experiments}   \label{exp}

We now give numerical illustrations of our theoretical results.
To illustrate the performance of the proposed summation methods, we conduct computational
experiments for two functions with different smoothness.
The computations were performed on a computer with a 10-core Apple M4 processor
(4 performance and 6 efficiency cores) and 24 GB of memory, running
macOS 26.6.1, using Python 3.14.6 with NumPy 2.5.1; the figures were produced with
Matplotlib 3.11.0. A single script generates all tables and figures in this section,
with a total running time of $0.44$ s.

Throughout this section, $p=s=2$, the weight is $w(t)=t^{\alpha}e^{-t}$ with $\alpha=1/2$ and
$\alpha=1$, and the input error levels are
$\delta = 10^{-4}, 10^{-5}, 10^{-6}, 10^{-7}, 10^{-8}$.
The experiment follows the setting of Sections 1--3 directly.
We specify four conventions on which it is based.

{\it Normalization.} All estimates in Sections 2 and 4 are stated on the unit ball of
the space $W^{\mu}_{s}$. Therefore, each function is normalized before the data are
perturbed, $\widehat{f} := f/\|f\|_{2,\mu}$, and every number reported below
refers to $\widehat{f}$. In particular, $\mathbf{f}$ is the vector of exact
Fourier--Laguerre coefficients of $\widehat{f}$, truncated at $k=N$.

{\it Perturbed data.} In our experiments, the perturbed data are defined by
$\mathbf{f}^\delta = \mathbf{f} + {\mathbf{\mbox{\boldmath$\xi$\unboldmath}}}$,
where the error sequence is a single random realization on the sphere of
radius $\delta$,
$$
{\mathbf{\mbox{\boldmath$\xi$\unboldmath}}}
 = \delta \, \frac{\mathbf{g}}{\|\mathbf{g}\|_{\ell_2}} , \qquad
\mathbf{g} = (g_0, \ldots, g_N) ,
$$
with independent standard normal variables $g_k$. Then
$\|{\mathbf{\mbox{\boldmath$\xi$\unboldmath}}}\|_{\ell_2} = \delta$ exactly, so the
perturbation condition (\ref{perturbation2}) for $p=2$ holds with equality.
The tables below report the error for one admissible perturbation of the full admissible
magnitude, rather than an average over random realizations. The realization was generated by
NumPy's PCG64 generator with seed $20260815$; for each error level, the
corresponding table row and figure use the same realization.

{\it Polynomial degree.} For $p=s=2$, Theorem \ref{Th_up} prescribes
$N \asymp \delta^{-1/\mu}$, and we take $N = \lceil \delta^{-1/\mu} \rceil$,
rounded up to the nearest even integer. Even $N$ is required for the de la Vall\'{e}e Poussin
means, for which $N=2n$, and is harmless for the Fourier sums.

{\it Error.} The reported error is the weighted uniform norm
$$
\max_{t>0} \bigl| (\widehat{f} - S^{\nu}_{N}\widehat{f}^\delta)(t)\sqrt{w(t)} \bigr| ,
$$
that is, the quantity estimated in Theorem \ref{Th_up}, rather than the
$L_{2,w}$ norm. The maximum is taken over a fixed grid of $10930$ points in the interval
$(0, 1964]$. Its right endpoint lies beyond the largest zero of every retained
$\ell^{(\alpha)}_{k}\sqrt{w}$; the contribution from $t$ beyond this endpoint is less than $10^{-6}$ of
the reported value.

\subsection{Example 1}

In this example, we consider the function $g_1(t) = e^{-3t}$.
Its Fourier--Laguerre coefficients $c_k=\langle g_1,\ell^{(\alpha)}_k\rangle$ decay
geometrically, and the ratio of consecutive coefficients
tends to $3/4$. Hence $\underline{k}^{\mu}c_k \to 0$ and
$\|g_1\|_{2,\mu} < \infty$ for {\it every} $\mu > 0$. Thus, this function has no
maximal smoothness in the scale $W^{\mu}_{2}$, and here $\mu$ is prescribed rather than
computed. We take $\mu = 3$ for both values of $\alpha$. With this choice of $\mu$, the peak of the
normalized $g_1\sqrt{w}$ is $54$--$92$ times higher than the largest
error level $\delta = 10^{-4}$, so Fig.~\ref{Fig1} displays the approximations
rather than the noise.

Table \ref{tbl1} presents the results of approximating $g_1$ by the Fourier and
de la Vall\'{e}e Poussin methods for $\alpha=1/2, 1$. Here $N$ denotes the degree of the
Laguerre polynomials used.

\begin{table}[ht]
\caption{Approximation of $g_1(t) = e^{-3t}$, $\mu = 3$. The normalization norms are
$\|g_1\|_{2,\mu} = 4.37\cdot 10^{1}$ for $\alpha = 1/2$ and
$4.21\cdot 10^{1}$ for $\alpha = 1$.}
\label{tbl1}
\centering
\begin{tabular}{c|c|cc|cc}
\hline
 & & \multicolumn{2}{c|}{$\alpha = 1/2$} & \multicolumn{2}{c}{$\alpha = 1$} \\
$\delta$ & $N$ & Fourier & de la Vall\'{e}e P. & Fourier & de la Vall\'{e}e P. \\
\hline
$10^{-4}$ &  22 & $9.65\cdot 10^{-5}$ & $1.32\cdot 10^{-4}$ & $7.14\cdot 10^{-5}$ & $9.90\cdot 10^{-5}$ \\
$10^{-5}$ &  48 & $1.23\cdot 10^{-5}$ & $6.20\cdot 10^{-6}$ & $8.29\cdot 10^{-6}$ & $9.18\cdot 10^{-6}$ \\
$10^{-6}$ & 100 & $5.21\cdot 10^{-7}$ & $2.96\cdot 10^{-7}$ & $7.60\cdot 10^{-7}$ & $6.41\cdot 10^{-7}$ \\
$10^{-7}$ & 216 & $5.50\cdot 10^{-8}$ & $5.41\cdot 10^{-8}$ & $5.08\cdot 10^{-8}$ & $4.38\cdot 10^{-8}$ \\
$10^{-8}$ & 466 & $8.91\cdot 10^{-9}$ & $7.31\cdot 10^{-9}$ & $3.10\cdot 10^{-9}$ & $3.12\cdot 10^{-9}$ \\
\hline
\end{tabular}
\end{table}

In every row of Table \ref{tbl1}, the computed error is smaller than the bound
$\delta^{(\mu-1/2)/\mu}$ from Theorem \ref{Th_up} for $p=s=2$, with the constant set
equal to one, by a factor ranging from $3.5$ to $70$.

Figure \ref{Fig1} shows the exact function $g_1$ and its two approximations
obtained by the two summation methods.

\begin{figure}[ht]
\centering
\includegraphics[width=0.92\textwidth]{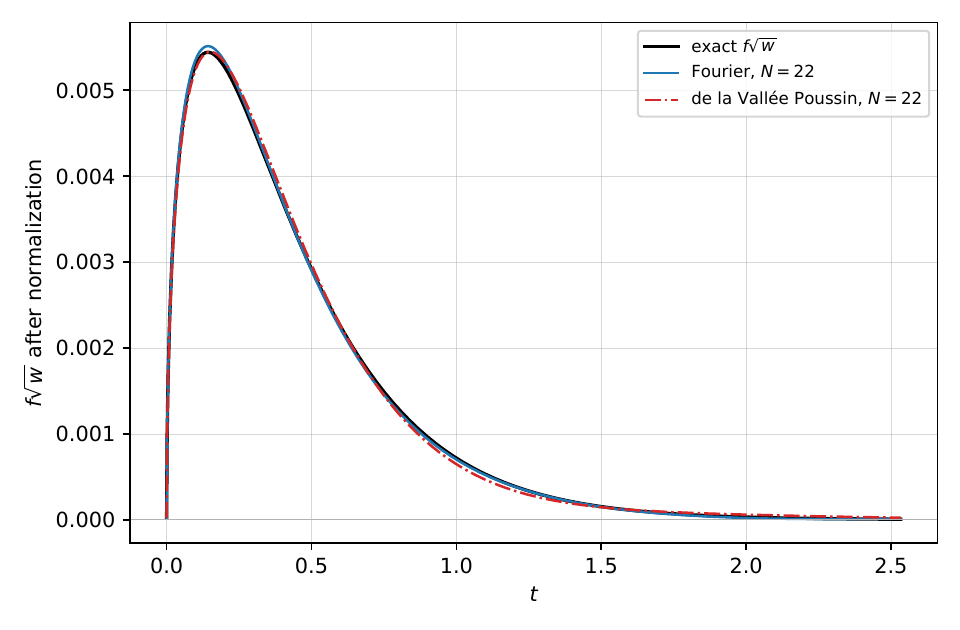}
\caption{Example 1, $\alpha = 1$, $\mu = 3$, $\delta = 10^{-4}$, $N = 22$: the exact
$g_1\sqrt{w}$ after normalization and its two approximations for the noise realization
corresponding to the first row of Table \ref{tbl1}.}
\label{Fig1}
\end{figure}

\subsection{Example 2}

We now consider the function $g_2(t) = t^b$ with $b=4.5$.
Its Fourier--Laguerre coefficients decay as $k^{-b-\alpha/2-1}$, so the terms
$\underline{k}^{2\mu}c_k^2$ in the norm have the exact order $k^{2\mu-2b-\alpha-2}$, and
the series converges if and only if
$\mu < \mu_{\max} := b + \alpha/2 + 1/2$; at the endpoint, it diverges
logarithmically. As the working value of $\mu$, we take the largest value with one decimal place that is strictly below
$\mu_{\max}$, namely, $\mu = 5.2$ for $\alpha = 1/2$ ($\mu_{\max} = 5.25$) and
$\mu = 5.4$ for $\alpha = 1$ ($\mu_{\max} = 5.5$).

Table \ref{tbl2} presents the results of approximating $g_2$ by the Fourier and
de la Vall\'{e}e Poussin methods for $\alpha=1/2, 1$.

\begin{table}[ht]
\caption{Approximation of $g_2(t) = t^{4.5}$. Left: $\alpha = 1/2$, $\mu = 5.2$,
$\|g_2\|_{2,\mu} = 3.32\cdot 10^{5}$. Right: $\alpha = 1$, $\mu = 5.4$,
$\|g_2\|_{2,\mu} = 6.93\cdot 10^{5}$.}
\label{tbl2}
\centering
\begin{tabular}{c|ccc|ccc}
\hline
 & \multicolumn{3}{c|}{$\alpha = 1/2$} & \multicolumn{3}{c}{$\alpha = 1$} \\
$\delta$ & $N$ & Fourier & de la Vall\'{e}e P. & $N$ & Fourier & de la Vall\'{e}e P. \\
\hline
$10^{-4}$ &  6 & $1.00\cdot 10^{-4}$ & $9.73\cdot 10^{-5}$ &  6 & $8.11\cdot 10^{-5}$ & $1.15\cdot 10^{-4}$ \\
$10^{-5}$ & 10 & $7.28\cdot 10^{-6}$ & $4.24\cdot 10^{-6}$ & 10 & $6.84\cdot 10^{-6}$ & $6.33\cdot 10^{-6}$ \\
$10^{-6}$ & 16 & $8.49\cdot 10^{-7}$ & $7.03\cdot 10^{-7}$ & 14 & $3.98\cdot 10^{-7}$ & $3.89\cdot 10^{-7}$ \\
$10^{-7}$ & 24 & $4.23\cdot 10^{-8}$ & $3.24\cdot 10^{-8}$ & 20 & $6.81\cdot 10^{-8}$ & $7.27\cdot 10^{-8}$ \\
$10^{-8}$ & 36 & $8.10\cdot 10^{-9}$ & $2.74\cdot 10^{-9}$ & 32 & $4.50\cdot 10^{-9}$ & $3.48\cdot 10^{-9}$ \\
\hline
\end{tabular}
\end{table}

In every row of Table \ref{tbl2}, the computed error is smaller than the same bound
$\delta^{(\mu-1/2)/\mu}$ by a factor ranging from $2.0$ to $21$.

Figure \ref{Fig2} shows the exact function $g_2$ and its two approximations
obtained by the two summation methods.

\begin{figure}[ht]
\centering
\includegraphics[width=0.92\textwidth]{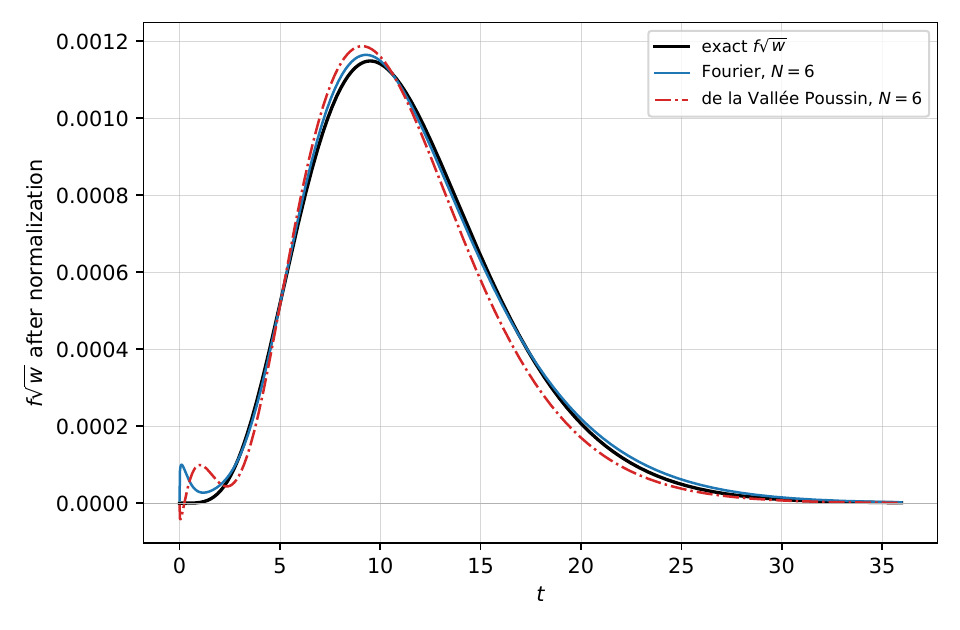}
\caption{Example 2, $\alpha = 1/2$, $\mu = 5.2$, $\delta = 10^{-4}$, $N = 6$:
the exact $g_2\sqrt{w}$ after normalization and its two approximations for the noise
realization corresponding to the first row of Table \ref{tbl2}.}
\label{Fig2}
\end{figure}

\appendix
\newpage

\renewcommand{\thetheorem}{A.\arabic{theorem}}
\renewcommand{\theproposition}{A.\arabic{proposition}}
\renewcommand{\thedefinition}{A.\arabic{definition}}
\renewcommand{\thecorollary}{A.\arabic{corollary}}
\renewcommand{\thelemma}{A.\arabic{lemma}}
\renewcommand{\theremark}{A.\arabic{remark}}
\renewcommand{\theexample}{A.\arabic{example}}
\renewcommand{\theequation}{A.\arabic{equation}}
\section{Proof of Lemma \ref{Lemma BE}}

{\bf Proof.}
A technical difficulty arises in obtaining a lower bound for the norm of $f \sqrt{w}$ in the uniform metric.
We describe it first.
A standard technique for obtaining lower bounds is to pass from the $C$-norm to the value of the function
at a point where it attains its maximum. This is possible when all terms in the sum attain their maxima
at the same point. Such a situation occurs, for example, for the Jacobi polynomials $p^{(\alpha,\beta)}_k$,
$\alpha \geq \beta \geq -1/2$, at the point $1$.
For Laguerre polynomials, however, the points of maximum depend on the index $k$ and therefore do not coincide.
We overcome this difficulty by finding a point common to all the polynomials, say $t_*$, at which all terms
are positive and have the same order as their maxima.
To choose $t_*$, consider the behavior of the Laguerre polynomials.
They are positive before their first zero and then oscillate with different periods, so that they
almost completely cancel one another.
It is therefore natural to choose the common point near $0$. We take $t_*=\frac{1}{12N}$.
We show that, at $t_*$, all terms of $f$ are positive.

Write
$$
L_k^{(\alpha)} (t) = \sum_{j=0}^k (-1)^j\, \binom{k+\alpha}{k-j}\, \frac{t^j}{j!}
= \sum_{j=0}^k (-1)^j a_j(t) ,
$$
where $a_j(t) = \binom{k+\alpha}{k-j}\, \frac{t^j}{j!}$.
Since
$$
\frac{\Gamma(\alpha+j+1)}{\Gamma(\alpha+1)} = (\alpha+1) \ldots (\alpha+j) \geq j! ,
$$
$$
\frac{k!}{(k-j)!} \leq k^j ,
$$
we have
$$
\frac{a_j(t)}{a_0(t)} = \frac{t^j}{j!}\, \frac{\Gamma(\alpha+1)\, k!}{\Gamma(\alpha+j+1)\, (k-j)!}
\le \frac{(kt)^j}{(j!)^2} .
$$
For $k\le 3N$ and $t=t_*$, we have $k t_*\le 1/4$, and hence
$$
\sum_{j=1}^k a_j(t_*) \le a_0\, \sum_{j\ge 1} \frac{4^{-j}}{(j!)^2}
= a_0 (I_0(1)-1) \le 0.2660664 a_0 ,
$$
where $I_0(x)$ is the modified Bessel function (see, for example, \cite[pp.~375 and 416]{AbrSt}):
$$
I_0(x)=\sum_{j\ge 0} \frac{(x/2)^{2j}}{(j!)^2} \quad \mbox{and}\quad I_0(1)=1.26606588\ldots .
$$
Since $a_0=L_k^{(\alpha)}(0)=\frac{\Gamma(k+\alpha+1)}{\Gamma(\alpha+1) k!}>0$, it follows that
$$
L_k^{(\alpha)}(t_*) \geq a_0 - \sum_{j\ge 1} a_j(t_*) \ge 0.7339336 L_k^{(\alpha)}(0) > 0 .
$$
We now derive the required estimate. Multiplication by the normalization factor gives
$$
\sqrt{\frac{k!}{\Gamma(k+\alpha+1)}}\, L_k^{(\alpha)}(0) = \frac{1}{\Gamma(\alpha+1)}\,
\sqrt{\frac{\Gamma(k+\alpha+1)}{k!}} .
$$
Next, let $\chi=(\ln \Gamma)'$ denote the digamma function (\cite[formula 6.3.1]{AbrSt}),
and let $\gamma$ denote Euler's constant:
$$
\gamma = \lim_{m\to \infty} \Big( \sum_{j=1}^m j^{-1} - \ln m \Big) = 0.5772156649\ldots
$$
(see \cite[formula 6.1.3]{AbrSt}). Clearly, $\chi$ is increasing.
By the inequality $\ln (1+1/k) > 1/(k+1)$, the sequence
$\sum_{j=1}^k j^{-1} - \ln k$ is decreasing and is therefore no smaller than its limit $\gamma$. Hence,
using \cite[formula 6.3.2]{AbrSt},
$$
\chi(k+1) = -\gamma + \sum_{j=1}^k j^{-1} \geq \ln k .
$$
Therefore,
$$
\ln \frac{\Gamma(k+\alpha+1)}{k!} = \int_0^\alpha \chi(k+1+u) du \geq \alpha \chi(k+1) \geq \alpha \ln k.
$$
This holds for every $\alpha \geq 0$ and $k\ge 1$. Thus,
$$
\frac{\Gamma(k+\alpha+1)}{k!} \ge k^\alpha \ge N^\alpha .
$$
Summing all terms in $f$ and using their positivity at $t_*$, we obtain
$$
|f(t_*)| \geq 0.7339336\, N\, \frac{N^{\alpha/2}}{\Gamma(\alpha+1)} .
$$
At the same point, the weight satisfies
$$
\sqrt{w(t_*)} = t_*^{\alpha/2} e^{-t_*/2}
= (12 N)^{-\alpha/2}\, e^{-1/(24N)} \ge (12 N)^{-\alpha/2}\, e^{-1/24} .
$$
Since $e^{-1/24}\, 0.7339336 \approx 0.7039813$, it follows that
$$
\|f \sqrt{w}\|_C \geq |f(t_*)|\, |\sqrt{w(t_*)}|
\geq \frac{0.7039}{\Gamma(\alpha+1)}\, 12^{-\alpha/2}\, N .
$$
This proves the lemma. \\
$\Box$

\medskip

\begin{remark}  \label{rem_A}  \rm
The choice $t_*=\frac{1}{12N}$ is a convenient point within the admissible
range rather than the optimum. The same calculation with $t_*=c/N$ gives $k t_*\le 3c$ for $k\le 3N$ and
$$
\sum_{j\ge1} a_j(t_*) \le a_0\bigl(I_0(2\sqrt{3c})-1\bigr),
$$
so Lemma~\ref{Lemma BE} holds with the constant
$$
\bar c(c)=\frac{\kappa(c)\, e^{-c/2}\, c^{\alpha/2}}{\Gamma(\alpha+1)},
\qquad
\kappa(c)=2-I_0\bigl(2\sqrt{3c}\bigr),
$$
for every $c$ such that $\kappa(c)>0$, that is, precisely when
$c<z_2^2/12=0.2723742\ldots$, where $z_2=1.8078967\ldots$ is the root of the equation $I_0(z)=2$.
The value $c=1/12$ corresponds to $\kappa=2-I_0(1)=0.733934\ldots$. The optimal $c$ depends only
on $\alpha$: for $\alpha=1$, it is $c=0.093$, which is nearly indistinguishable from $1/12$; for
$\alpha=5$, the optimal value $c=0.195$ increases $\bar c$ by a factor of $3.5$, and as $\alpha$
increases, the gain is unbounded.
\end{remark}

\end{document}